\long
\def\salta#1{\relax}
\newcommand{\R}{{I\!\!R}}
\newcommand{\N}{{I\!\!N}}
\newcommand{\car}{{\raise2pt\hbox{$\chi$}}}
\renewcommand{\t }{\tau }
\newtheorem{Theorem}{Theorem}[section]
\newtheorem{Corollary}[Theorem]{Corollary}
\newtheorem{Lemma}[Theorem]{Lemma}
\newtheorem{remark}[Theorem]{Remark}
\newcommand{\C}{\mathcal{C}}
 \newcommand{\NN}{\mathcal{N}}
 \newcommand{\W}{\mathcal{W}}
\newcommand{\I}{\mathcal{I}}
\begin{document}
\title[Multiple solutions of a Hardy-Sobolev  Equation.]{Multiple solutions of an elliptic Hardy-Sobolev equation with critical exponents on compact Riemannian manifolds.}
\author[Y. Maliki, F.Z. Terki]{Y. Maliki$^*$ and F.Z. Terki}
\address{Y. Maliki, F.Z. Terki \hfill \break\indent D\'epartement de
Math\'ematiques, Universit\'e Abou Bakr Belkaïd, Tlemcen, \hfill\break%
\indent Tlemcen 13000, Algeria.} \email{\texttt{malyouc@yahoo.fr,
fatimazohra113@yahoo.fr}}

\date{}
\maketitle

\begin{abstract} On a compact Riemannian manifold, we prove the existence of multiple solutions for an elliptic equation with critical Sobolev growth and  critical Hardy potential.
\end{abstract}

\section{Introduction}

Let $(M,g$) be a Riemannian manifold of dimension $n\ge3$ . For a fixed point $ p$ in $M$, we define the function $\rho_p$ on $M$ as follows
\begin{equation*}
    \rho_p(x)=\left\{
       \begin{array}{ll}
         dist_g(p,x), &x\in B(p,\delta_g),\\
         \delta_g, &x\in M\setminus B(p,\delta_g)
       \end{array}
     \right.
\end{equation*}
where, $\delta_g$ denotes the injectivity radius of $M$. Let $h$ be a continuous functions on $M$. Consider on $M\setminus\{p\}$ the following Hardy-Sobolev equation:
\begin{equation} \label{eq1.1}
\Delta _{g}u-\frac{h}{\rho_p ^{2}(x)}u=|u|^{2^*-2}u,
\tag{$E_h$}
\end{equation}
where $2^*=\frac{2n}{n-2}$ is the Sobolev critical exponent.\\
In this paper, we are interested in the study of existence of multiple solutions of equation \eqref{eq1.1}. When dropping the singular term $ \frac{1}{\rho_p ^{2}(x)}$ from equation \eqref{eq1.1} we fall in the  so called  Yamabe  equation which is very known in the literature and whose origin comes from the study of conformal deformation of the metric to constant scalar curvature. A positive solution $u$ of the Yamabe equation provides a conformal metric $g'=u^{\frac{4}{n-2}}g$ with scalar curvature a constant function. Of course, the presence of the critical Sobolev exponent made the resolution   of such equation  difficult and appealed to a more sophisticated analysis. We can refer the reader to the book \cite{Hebey} for a compendium on this topic. Equation \eqref{eq1.1} can be then seen as  a Yamabe type equation of a singular type.\\
 When the function $\rho_p$ is of power $0<\gamma<2$, the study of the associated equations is related to the study of conformal deformation to constant scalar curvature of metrics which are smooth only in some geodesic ball $B(p,\delta)$ (see  \cite{Madani} ). As the inclusion $H^2_1(M)\subset L(M,\rho_p^{-\gamma})$ (where  $H^2_1(M) $ and  $L(M,\rho_p^{-\gamma})$ are  defined in section 2)   is compact for $0<\gamma<2$, the study of existence of solutions, in this case, goes as in the case of  'regular' Yamabe equation  (see  \cite{Madani} ).\\
However, when $\gamma=2$,  regarding the non compactness of the inclusion $H^2_1(M)\subset L(M,\rho_p^{-2})$,  equation \eqref{eq1.1} is also critical in terms of the power,$\gamma=2 $,  of the function $\rho_p$.\\
In studying equations \eqref{eq1.1}, besides the critical Sobolev exponent $2^*$, the singular term plays a prominent role. As it has been shown in \cite{Maliki},  it interferes in the decomposition of the Palais-Smale sequence of the functional energy and then collaborates principally in determining the safe energy level for the compactness of the Palais-Smale sequences.\\
The singular term interferes also in the regularity of solutions in that only weak solutions can be obtained as contrasted to the case of the 'regular' Yamabe equation where strong solutions can be obtained (see \cite{Madani}).
The author in \cite{Madani} studied equation \eqref{eq1.1} and proved the existence of at least one solution of minimal energy. In this work, we are interested in the existence of multiple solutions of high energy. The main tool that we employ to achieve our interest is the  classical Lusternik-Schnirelmann theory (see for example \cite{Amrosetti}). We note that multiple solutions of minimal energy can also be obtained.\\
 In \cite{Benci}, the authors proved a multiplicity result for a subcritical regular equation on compact Riemannian manifold. They, used Lusternik-Schnirelmann theory together with some astute constructions. We will follow the authors  in \cite{Benci} and \cite{Chabrowski}  in their global framework. As aforesaid, equation \eqref{eq1.1} is double critical, which leads to further technical difficulties to arise and then a deeper analysis needs to be done. \\
 The paper is organized as follows: in section 2 we introduce some notations and useful results that will be of great use and state the main result. In section 3, a noncompact analysis is done. In section 4, we give an overview of the proof of the main result and then collect  ingredients for the proof of the main results. The fourth section is devoted to the proof of the main result.
\section{Notations, useful results and statement of the main result.}
In this section, we introduce some notations and cite results that are useful
in our study.\\
Throughout the paper, we will denote by $B(a,r)$ a ball of center
$a$ and radius $r>0$, the point $a$ will be specified either in $M$
or in $\R^n$, and $B(r)$ is a ball in $\R^n$ of center $0$ and
radius $r>0$.\\
Let $q\in M$. Denote by $\exp_q$ the exponential map which defines, for $r>0$ small, a diffeomorphism from $ B(r)$ to $ B(q,r)$.\\ Let $H_{1}^{2}(M)$ be the Sobolev space consisting of
the completion of $\C^{\infty}(M)$ with respect to the norm
$$||u||_{H_{1}^{2}(M)}=\int_M(|\nabla u|^2+u^2)dv_g.$$
$M$ being compact, $H_{1}^{2}(M)$ is then  embedded in $L_q(M)$
compactly for $q<2^*=\frac{2n}{n-2}$ and continuously for $q=2^*$.\\
Let $K(n,2)$ denote the best constant in Sobolev inequality that
asserts that there exists a constant $B>0$ such that for any $u\in
H_{1}^{2}(M)$,
\begin{equation}\label{2.1}
 ||u||^2_{L_{2^*}(M)}\le K^2(n,2)||\nabla u||^2_{L_2(M)}+B||u||^2_{L_2(M)}.
\end{equation}
The constant $K(n,2)$ is defined to be
\begin{equation*}
K(n,2)=\inf_{u\in  H^1(\mathbb{R}^n)\setminus{0})} \frac{\int_{\mathbb{R}^n}|\nabla u|^2dx}{(\int_{\mathbb{R}^n}|u|^{2^*})^{\frac{2}{2^*}}}.
\end{equation*}
It is well known that the extremal functions for the above infinimum are the family of functions
   \begin{equation} \label{famr}
w_{\mu}(x)=(n(n-2))^{\frac{n-2}{4}}\left(\frac{\mu}{\mu^{2}+|x|^{2}}\right)^{\frac{n-2}{2}}, \mu>0.
    \end{equation}
These family of functions classifies all positive solutions of the Euclidean equation
\begin{equation}\label{eqr}
    \Delta u= u^{2^*-1}.
\end{equation}
Denote by $L_2(M,\rho_p^{-2})$ the space of
functions $u$ such that $\frac{u^2}{\rho_p^{2}}$ is integrable.
This space is endowed with norm
$\|u\|^2_{2,\rho_p^{-2}}=\int_M\frac{|u|^2}{\rho_p^{2}}dv_g$.\\
In \cite{Madani}, the author proved the  following Hardy inequality: let  $(M,g)$ be any compact manifold $M$,  for every $\varepsilon>0$ there exists a
positive constant $A(\varepsilon)$ such that for any $u\in
H^2_1(M)$,
\begin{equation}\label{2.3}
\int_M \frac{u^2}{\rho^2_p}dv_g\le(
K^2(n,-2)+\varepsilon)\int_M|\nabla
u|^2dv_g+A(\varepsilon)\int_Mu^2dv_g,
\end{equation}
 with $K(n,-2)$ being the
best constant in the Euclidean Hardy inequality
\begin{equation*}
\int_{\mathbb{R}^n} \frac{u^2}{|x|^2}dx \le
K(n,-2)^2\int_{\mathbb{R}^n}|\nabla u|^2dx,
u\in\C^\infty_o(\mathbb{R}^n).
\end{equation*}
 The constant $K(n,-2)$ is equal to $ \frac{2}{n-2}$ and is not attained.\\
 If $u$ is supported in some ball $B(p,\delta),0<\delta<\delta_g$,  then there exists positive constant $K_\delta(n,-2)$
\begin{equation}\label{2.4}
\int_{B(p,\delta)} \frac{u^2}{\rho^2_p}dv_g\le
K_\delta(n,-2)\int_{B(p,\delta)}|\nabla u|^2dv_g,
\end{equation}
with $ K_\delta(n,-2)$ goes to $K(n,-2)$ as $\delta$ goes to$0$. \\
 On the Euclidean space $\mathbb{R}^n$, the author in \cite{Terracini}  studied the equation
\begin{equation}\label{eqs}\tag{$Eu_\lambda$}
\Delta u -\frac{\lambda}{|x|^2}=|u|^{\frac{4}{n-2}}u,
\end{equation}
where $\lambda>0$ is a positive constant. She proved in particular that for $\lambda\ge \frac{(n-2)^2}{4}$ there is no positive solution and for $0<\lambda<K^2(n,-2)=\frac{(n-2)^2}{4} $, all positive solutions are the class of functions
 \begin{equation}\label{fams}
w_{\lambda,\xi}(x)=(n(n-2)a_\lambda^2)^{\frac{n-2}{4}}
\left(\frac{\xi^{a_\lambda}|x|^{a_\lambda-1}}{\xi^{2a_\lambda}+|x|^{2a_\lambda}}\right)^{\frac{n-2}{2}},\xi>0,
\end{equation}
 where $a_\lambda= \sqrt{1-\lambda K^2(n,-2)}$.
Note that  for $\lambda=0$, we meet the functions \eqref{fams}. Furthermore, if we denote by $S$ the infimum
 \begin{equation*}
S=\inf_{u\in D^{1,2},u\neq0}\frac{(\int_{\mathbb{R}^n}|\nabla u|^2-\lambda\frac{u^2}{|x|^2})dx}{(\int_{\mathbb{R}^n}|u|^{2^*}dx)^{\frac{2}{2^*}}}
 \end{equation*}
 then, the functions defined by \eqref{fams} are extremal for this infimum , that is
 \begin{equation}\label{2.6}
S_\lambda=\frac{(\int_{\mathbb{R}^n}|\nabla w_{\lambda,\xi}|^2-\lambda\frac{w_{\lambda,\xi}^2}{|x|^2})dx}
{(\int_{\mathbb{R}^n}|w_{\lambda,\xi}|^{2^*}dx)^{\frac{2}{2^*}}}.
 \end{equation}
Moreover,
 \begin{equation}\label{2.7}
S_\lambda=\frac{(1-\lambda K^2(n,-2))^{\frac{n-1}{n}}}{K^2(n,2)}.
 \end{equation}
 Let $h$ be  a continuous function on $M$ and $p\in M$ a fixed point. Let us take $\lambda = h(p)>0$ with $1-h(p) K(n,-2)^2>0$. We denote by $D^*$ the constant
 \begin{eqnarray}\label{2.8}
    D^*=\frac{(S_{h(p)})^{\frac{n}{2}}}{n}=\frac{(1-h(p) K^2(n,-2))^{\frac{n-1}{2}}}{n K^2(n,2)}.
    \end{eqnarray}
Let
\begin{equation*}
\mu=\inf_{u\in H^2_1(M),u\neq0}\frac{\int_M(|\nabla
u|^2-\frac{h}{\rho_p^2}u^2)dv_g}{(\int_M|u|^{2^*}dv_g)^{\frac{2}{2^*}}}.
\end{equation*}
In \cite{Madani}, the author proved  an existence result for equation \eqref{eq1.1} on compact manifold under the condition
\begin{equation*}
\mu<\frac{1-h(p)K^{2}(n,-2)}{K^{2}(n,2)}=(nD^*)^{\frac{2}{n}}(1-h(p)K^{2}(n,-2))^{\frac{1}{n}}.
\end{equation*}
provided of course that $1-h(p)K^{2}(n,-2)>0$.
Further in this paper, we will see that this existence result is a simple consequence of a Palais-Smale decomposition result that we established in \cite{Maliki}. Furthermore, we consider the reverse inequality $\mu>( nD^*)^{\frac{2}{n}}$ and  show that effectively multiple solutions exist in this case.  In a very precise way, we establish the following result
\begin{Theorem} Let $(M,g)$ be a compact Riemannian manifold of dimension $n$.  Suppose that the function $h$ is smooth, changes sign once and satisfies the following conditions
\begin{enumerate}
\item $h(p)>0, 0<1-h(p)K(n,-2)^2<\frac{1}{2}$,
\item $n=Dim(M)> \frac{2}{a}+2, a=\sqrt{1-h(p)K^2(n,-2)}$,
\item $(A(n,a)+h(p))Scal_g(p)-6\frac{\Delta h(p)}{n}>0$, where $A(n,a)$ is defined by \eqref{4.16}.
\end{enumerate}
If $\mu>( nD^*)^{\frac{2}{n}}$, equation \eqref{eq1.1} admits at least $Cat(M)$ ( $ Cat(M)$ is the Lusternik-Schnirelmann category of $M$ defined in section 3)  weak solutions such that each weak solution $u$ satisfies $D^*<J_h(u)<D^*+ g(\varepsilon)$ and at least one weak solution $u$ such that $D^*+ g(\varepsilon)< J_h(u)$.
\end{Theorem}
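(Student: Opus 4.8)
The plan is to realise the solutions as constrained critical points of the energy functional
\[
J_h(u)=\frac12\int_M\Big(|\nabla u|^2-\frac{h}{\rho_p^2}u^2\Big)dv_g-\frac{1}{2^*}\int_M|u|^{2^*}dv_g
\]
on the Nehari manifold
\[
\mathcal{N}=\Big\{u\in H_1^2(M)\setminus\{0\}:\ \int_M\Big(|\nabla u|^2-\frac{h}{\rho_p^2}u^2\Big)dv_g=\int_M|u|^{2^*}dv_g\Big\},
\]
on which $J_h(u)=\frac1n\int_M|u|^{2^*}dv_g\ge\frac1n\mu^{n/2}$ by the very definition of $\mu$. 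Since the standing hypothesis $\mu>(nD^*)^{2/n}$ is equivalent to $\frac1n\mu^{n/2}>D^*$, every critical point of $J_h|_{\mathcal N}$ already obeys $J_h(u)>D^*$, which is exactly the lower bound in the statement. The backbone is the Lusternik--Schnirelmann principle: on the sublevel set $\mathcal N^{c}=\{u\in\mathcal N:J_h(u)\le c\}$, if $J_h$ satisfies the Palais--Smale condition up to $c$ then it has at least $\mathrm{Cat}(\mathcal N^{c})$ critical points. The theorem is thus reduced to (a) isolating an energy window $(D^*,D^*+g(\varepsilon))$ on which Palais--Smale holds, and (b) proving $\mathrm{Cat}(\mathcal N^{D^*+g(\varepsilon)})\ge\mathrm{Cat}(M)$, together with a final min--max producing one solution above the window.

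For (a) I would feed the Palais--Smale decomposition of \cite{Maliki} into the analysis: any PS sequence for $J_h|_{\mathcal N}$ splits, up to a subsequence, as a weak limit (itself a weak solution) plus finitely many rescaled bubbles, each quantum of energy being either $D^*=\frac1n S_{h(p)}^{n/2}$, the energy of the Hardy extremals \eqref{fams} blown up at the singular point $p$, or the strictly larger regular Sobolev energy $c_0=\frac{1}{n}K(n,2)^{-n}$ of the Aubin--Talenti bubbles \eqref{famr} blown up at any other point. Because a nontrivial weak limit lies on $\mathcal N$ and contributes more than $D^*$, while each extra bubble adds at least $D^*$, and since hypothesis (1) forces $c_0>2D^*$ so that regular bubbles cannot interfere below $2D^*$, the only way compactness can fail below $2D^*$ is a single Hardy bubble at $p$ over a trivial weak limit, occurring exactly at the level $D^*$. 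Hence choosing $g(\varepsilon)$ with $0<g(\varepsilon)<D^*$ keeps the window strictly between the first and the next bubbling quanta, and $J_h|_{\mathcal N}$ satisfies Palais--Smale throughout $(D^*,D^*+g(\varepsilon))$.

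For (b), following \cite{Benci,Chabrowski}, I would construct a map $\Phi_\varepsilon\colon M\to\mathcal N$ and a barycentre $\beta\colon\mathcal N^{D^*+g(\varepsilon)}\to M$ with $\beta\circ\Phi_\varepsilon\simeq\mathrm{id}_M$, since the elementary category inequality for such a factorisation then yields $\mathrm{Cat}(M)\le\mathrm{Cat}(\mathcal N^{D^*+g(\varepsilon)})$. The map $\Phi_\varepsilon$ is built by transplanting the extremals \eqref{famr}--\eqref{fams}, truncated and rescaled at scale $\varepsilon$, through normal coordinates centred at $q\in M$ and normalising onto $\mathcal N$. The model computation is the energy expansion at the singular point,
\[
J_h(\Phi_\varepsilon(p))=D^*+c_n\big[(A(n,a)+h(p))\,Scal_g(p)-6\tfrac{\Delta h(p)}{n}\big]\varepsilon^{2}+o(\varepsilon^{2}),\qquad c_n>0,
\]
in which the dimension bound (2) is precisely what renders the defining integrals convergent, the range condition (1) fixes $a=\sqrt{1-h(p)K^2(n,-2)}\in(0,\tfrac{1}{\sqrt{2}})$, and hypothesis (3) makes the bracket positive; this pins the size $g(\varepsilon)=O(\varepsilon^{2})$ and shows $J_h(\Phi_\varepsilon(p))>D^*$. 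The barycentre $\beta$ is well defined and continuous because the decomposition of \cite{Maliki} forces any $u$ with $J_h(u)$ close to the bottom to concentrate near a single point of $M$, and a direct estimate gives $\beta(\Phi_\varepsilon(q))\to q$ uniformly as $\varepsilon\to0$.

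The extra higher solution I would extract by a min--max over contractions of $A=\Phi_\varepsilon(M)$: since $A$ is not contractible inside $\mathcal N^{D^*+g(\varepsilon)}$ (it detects the topology of $M$ through $\beta$) but is contractible in $\mathcal N$ itself by letting the concentration scale degenerate, the level $c^*=\inf_{\gamma}\max_{A}J_h\circ\gamma$ over such contractions $\gamma$ is a critical value with $c^*>D^*+g(\varepsilon)$, once Palais--Smale is re-verified at the next window via \cite{Maliki}. I expect the genuine obstacle to be the interface between (a) and (b): the singular point $p$ carries a strictly smaller bubbling energy $D^*<c_0$ than every regular point, so a barycentre family that honestly sweeps out all of $M$ wants to cross the energy gap $[D^*,c_0]$ containing the obstructing tower level $2D^*$. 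Holding the whole image $\Phi_\varepsilon(M)$ inside one compactness window while still recovering all of $\mathrm{Cat}(M)$ therefore hinges on the sharp expansion above---in particular on the exact constant $A(n,a)$ and the positivity in (3)---and on the precise bubble bookkeeping of \cite{Maliki}; this is where the double-critical nature of \eqref{eq1.1} makes the analysis substantially harder than in the purely Sobolev-critical case treated in \cite{Benci,Chabrowski}.
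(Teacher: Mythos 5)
Your global strategy coincides with the paper's: the Nehari constraint $\NN_h$, a Palais--Smale window $(D^*,2D^*)$ extracted from the decomposition theorem of \cite{Maliki} exactly as in Corollary \ref{cor1} (nontrivial weak limits cost more than $D^*$ because $\mu>(nD^*)^{2/n}$, sign-changing Hardy bubbles cost more than $2D^*$, and hypothesis (1) pushes the Aubin--Talenti quantum $c_0=\tfrac{1}{n}K(n,2)^{-n}$ above $2D^*$), a pair of maps $M\to\Sigma_\varepsilon\to M_{r_M}$ whose composition is homotopic to the identity, and a compact contractible superset of the image with bounded energy to produce the extra high-energy critical point via the second clause of Theorem \ref{thm-L-S} --- your min--max over contractions is the same device in different words.

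The gap is in the construction of the map $M\to\Sigma_\varepsilon$, and you name the difficulty yourself without closing it. A test function obtained by ``transplanting the extremals through normal coordinates centred at $q$'' does not lie in $\Sigma_\varepsilon$ for $q\neq p$: away from the singularity the relevant profile is the Aubin--Talenti bubble, whose Nehari-projected energy is $c_0\ge 2D^*$ under hypothesis (1), hence strictly outside the window $(D^*,D^*+g(\varepsilon))$ in which compactness holds. The paper's device is the two-bubble ansatz $\I_\varepsilon(q)=\Phi\bigl((1-\varepsilon)\phi_{p,\varepsilon}+\varepsilon\phi_{q,\varepsilon}\bigr)$ with disjoint supports: the dominant bubble always sits at the singular point $p$, so after projection onto $\NN_h$ the energy is $D^*+\hbar(\varepsilon)$ with $\hbar(\varepsilon)\to0$, while the $\varepsilon$-weighted copy at $q$ contributes only at order $\varepsilon^2$ and exists solely to be detected by the barycentre. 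Two consequences of this ansatz are absent from your sketch: hypothesis (2), $a(n-2)>2$, is used not only to make the expansion integrals at $p$ converge but also to force the interaction term $\int_M \frac{|h|}{\rho_p^2}\phi_{q,\varepsilon}^2\,dv_g$ of the $q$-bubble with the Hardy singularity at $p$ to be $O(\varepsilon^2)$ (Lemma \ref{lem-inj}); and the barycentre must be adjusted so that it recovers $q$ rather than $p$, since the ordinary barycentre of $(1-\varepsilon)\phi_{p,\varepsilon}+\varepsilon\phi_{q,\varepsilon}$ concentrates at $p$ as $\varepsilon\to0$ (Lemma \ref{lem4.3}). Your closing paragraph correctly identifies the crossing of the gap $[D^*,c_0]$ as the central obstruction, but the theorem stands or falls on this two-bubble construction, which your proposal does not supply.
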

\section{compactness of Palais-Smale sequences}
Let $J_h$ be the functional defined on $H_1^2(M)$ by
\begin{equation*}
J_h(u)=\frac{1}{2}\int_M(|\nabla
u|^2-\frac{h}{\rho^2_p}
u^2)dv_g-\frac{1}{2^*}\int_M|u|^{2^*}dv_g.
\end{equation*}
It is a $C^2$ functional whose critical points are weak solutions of equation \eqref{eq1.1}. \\
A Palais-Smale sequence $u_m$ ( P-S in short ) of $J_h $ at a level
$d$ is defined to be the sequence that satisfies $J_h(u_m)\to
d$ and $DJ_h(u_m)\varphi\to 0,\forall\varphi \in H^2_1(M)$.\\
 The functional $J_h$is said to satisfy P-S condition et level $d$ if each P-S sequence at level $d$ is relatively compact.\\
In this section,  we  determine a region of levels where P-S condition is satisfied and then  critical points of the function $J_h$ can be obtained.This can be done by analyzing asymptotically the behavior of the P-S sequences. In a previous paper \cite{Maliki}, based on blow-up theory in \cite{Druet-hebbey-robert} and on a result in \cite{D.Smet}, we asymptotically studied P-S sequences of the functional $J_h$ and established  a Struwe-type decomposition formula for P-S sequences of the functional $J_h$. For the seek of clearness we cite this theorem and we refer to \cite{Maliki} for a detailed proof. Let us first introduce on $D^{1,2}(\R^n)$ the   functionals
\begin{eqnarray*}
J(u)&=& \frac{1}{2}\int_{\R^n}|\nabla
u|^2dx-\frac{1}{2^*}\int_{\R^n}|u|^{2^*}dx, \text{ and } \\
 J_\infty (u)&=& \frac{1}{2}\int_{\R^n}|\nabla
u|^2dx- \frac{h(p)}{2}\int_{\R^n}\frac{u^2}{|x|^2}
dx-\frac{1}{2^*}\int_{\R^n}|u|^{2^*}dx.
\end{eqnarray*}
In \cite{Maliki}, we established the following decomposition  theorem:
\begin{Theorem}\label{thm3.6} Let $(M,g)$ be a compact Riemannian manifold with
$dim(M)=n\ge3$ and let $h$ be a continuous function on $M$ that  on the point $p\in M$, it satisfies $0<h(p)<\frac{1}{K(n,2,-2)^2}$.\\
Let $u_m$ be a P-S sequence of the functional $J_h$ at level $d$. Then, there
exist $k \in \N$, sequences $ R_m^i>0,R_m^i\underset{m\to\infty}{\to}0$, $\ell\in\N^n$
sequences $\t_m^j>0,\t_m^j\underset{m\to\infty}{\to}0$,
converging sequences  $x_m^j\to x_o^j\neq p$ in $M$, a solution
$u\in H^2_1(M)$ of \eqref{eq1.1},  solutions $v_i\in D^{1,2}(\R^n)$
 of $ Eu_{h(p)}$ and nontrivial solutions $\nu_j\in
D^{1,2}(\R^n)$ of \eqref{eqr} such that up to a subsequence
\begin{eqnarray}\label{3.9}
\nonumber_m&=&u+\sum_{i=1}^{k}(R^i_m)^{\frac{2-n}{n}}
\eta_r(\exp^{-1}_p(x))v_i((R_m^i)^{-1}\exp^{-1}_p(x))\\&+&\sum_{j=1}^{\ell}(\tau^i_m)^{\frac{2-n}{n}}
\eta_r(\exp^{-1}_{x_m^j}(x))\nu_j((\tau_m^j)^{-1}\exp^{-1}_{x_m^j}(x))+\W_m,\\&&\nonumber
\text{ with } \W_m\to 0 \text{ in }H^1_2(M),
\end{eqnarray}
and
\begin{equation}\label{3.10}
J_h(u_m)=J_h(u)+\sum_{i=1}^k
J_\infty(v_i)+\sum_{j=1}^l J(\nu_j)+o(1).
\end{equation}
\end{Theorem}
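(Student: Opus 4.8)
The plan is to follow the by-now-standard blow-up scheme for critical variational problems, adapted so as to accommodate the Hardy singularity at $p$. First I would establish that the P-S sequence $u_m$ is bounded in $H^2_1(M)$: writing $J_h(u_m)=d+o(1)$ together with $DJ_h(u_m)u_m=o(\|u_m\|_{H^2_1(M)})$ and combining the two in the usual way gives, after controlling the singular term by the Hardy inequality \eqref{2.3} with $\varepsilon$ taken small relative to $1-h(p)K^2(n,-2)>0$, a uniform bound on $\|u_m\|_{H^2_1(M)}$. Passing to a subsequence, $u_m\rightharpoonup u$ weakly in $H^2_1(M)$, strongly in $L^2(M)$ and a.e.; testing $DJ_h(u_m)$ against a fixed $\varphi\in\C^\infty(M)$ and passing to the limit shows $DJ_h(u)=0$, i.e. $u$ is a weak solution of \eqref{eq1.1}.

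Next I would set up the inductive extraction. Put $u_m^1=u_m-u$. By the Brezis--Lieb lemma for the $L^{2^*}$ term and weak continuity of the quadratic (gradient and Hardy) parts, $u_m^1$ is again a P-S sequence, now for the limit functional at the reduced level $d-J_h(u)$, with $u_m^1\rightharpoonup 0$. If $u_m^1\to0$ strongly the decomposition is trivial; otherwise concentration occurs, and I would locate it via the Lions concentration function $Q_m(r)=\sup_{x\in M}\int_{B(x,r)}|\nabla u_m^1|^2\,dv_g$. Fixing a small level $\sigma>0$ below the least bubble energy and choosing centers and radii realizing $Q_m(\cdot)=\sigma$ produces the concentration parameters.

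The crucial step is the blow-up analysis, where the dichotomy between the two bubble types appears. Rescaling $u_m^1$ about a concentration center by the critical rescaling $v\mapsto R^{(2-n)/2}v(R\,\cdot)$ that preserves the Dirichlet energy, two cases arise according to whether the center converges to the singular point $p$ or to a regular point $x_o^j\neq p$. When the center converges to $p$, I read the rescaling in normal coordinates $\exp_p^{-1}$; since $\rho_p(\exp_p(y))=|y|$ and $h$ is continuous, the rescaled Hardy potential converges to $h(p)/|x|^2$, so the limit profile $v_i$ solves the singular Euclidean equation $(Eu_{h(p)})$ and is of the family \eqref{fams}. When the center stays away from $p$, the factor $\rho_p^{-2}$ is bounded near $x_o^j$, hence the rescaled singular term vanishes in the limit and the profile $\nu_j$ solves the regular equation \eqref{eqr}, i.e. is a standard bubble \eqref{famr}. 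Here I would invoke the blow-up estimates of \cite{Druet-hebbey-robert}, the classification of \cite{Terracini} for the profiles at $p$, and the compactness result of \cite{D.Smet}, to guarantee that the rescaled sequences converge strongly on compact sets and that the extracted profiles are nontrivial solutions of the respective limit equations.

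Finally, subtracting the rescaled bubble from $u_m^1$ yields a new P-S sequence at a strictly lower level, and the procedure iterates. Termination follows because every nontrivial solution of $(Eu_{h(p)})$ carries energy at least $D^*$ and every nontrivial solution of \eqref{eqr} carries energy at least $\frac{1}{n}K(n,2)^{-n}$, so with $\delta_0=\min\{D^*,\frac{1}{n}K(n,2)^{-n}\}>0$ at most $\lfloor (d-J_h(u))/\delta_0\rfloor$ bubbles can be split off, which bounds $k$ and $\ell$. Tracking the energy at each extraction---using that the cross terms between distinct bubbles and between bubbles and the weak limit vanish because their masses concentrate at different scales or points (the standard interaction estimate)---yields the energy identity \eqref{3.10}, while collecting the extracted profiles and setting $\W_m$ equal to the final remainder gives the pointwise decomposition \eqref{3.9} with $\W_m\to0$ in $H^2_1(M)$. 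I expect the main obstacle to be precisely the separation of the two concentration mechanisms at $p$: one must verify that a bubble whose center converges to $p$ but whose scale is comparable to its distance to $p$ is attributed to the correct limit equation, and that the rescaled Hardy term always passes to the correct Euclidean limit. This is the technical heart where the \emph{double critical} nature of \eqref{eq1.1} forces the finer analysis of \cite{Maliki}.
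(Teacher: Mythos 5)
The paper does not actually prove this theorem: it states it as a citation of the companion paper \cite{Maliki}, remarking only that the proof there rests on the blow-up theory of \cite{Druet-hebbey-robert} and on the Palais--Smale decomposition of \cite{D.Smet} for Hardy potentials. Your outline follows exactly that strategy (boundedness via the Hardy inequality \eqref{2.3}, weak limit solving \eqref{eq1.1}, iterated extraction of concentration profiles with the dichotomy ``centre at $p$ versus centre away from $p$'', energy quantization and the interaction estimates giving \eqref{3.10}), so there is no divergence of method to report. The one point I would press you on is the step you yourself flag as the technical heart: the regime where the concentration centres $x_m$ converge to $p$ but $\rho_p(x_m)/R_m\to\infty$. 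In Smets' analysis this regime genuinely occurs and produces a \emph{regular} Aubin--Talenti bubble sitting at the singular point, which is not covered by either branch of your dichotomy as written and is not obviously compatible with the statement's requirement that the regular profiles concentrate at $x_o^j\neq p$; your sketch acknowledges the issue but does not resolve it, and neither does anything in the present paper, so this attribution step remains a gap that only the detailed argument of \cite{Maliki} could close. Everything else in your plan (the lower energy bounds $J_\infty(v_i)\ge D^*$ and $J(\nu_j)\ge \frac{1}{n}K(n,2)^{-n}$ forcing termination, the Brezis--Lieb splitting, the vanishing of cross terms) is standard and correctly deployed.
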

Before we derive some consequences of the above theorem we draw attention to the following important remark
 \begin{remark} If $u$ is a changing sign of equation \eqref{eqs} with $\lambda=h(p)$, then $J(u)> 2D^*$.
 \end{remark}
  In fact write $u=u^++u^-$, where $u^+=\max(u,0)$ and $v^-=\min(u,0)$. We then get
\begin{eqnarray}\label{3.6}
\nonumber\int_{\R^n}\left(|\nabla u^+|^2-\frac{h(p)}{|x|^2}{u^{+}}^{2}\right)dx&=&\int_{\R^n}(\nabla u.\nabla u^+-\frac{h(p)}{|x|^2}uu^{+})dx\\&=&\int_{\R^n}|u|^{2^*-1}uu^+dx=\int_{\R^n}|u^+|^{2^*}dx
\end{eqnarray}
Since $u^+$ cannot be a 'member' of the family of functions defined by \eqref{fams},  then by \eqref{3.6} we get
\begin{eqnarray*}
J_\infty(u^+)&=&\frac{1}{n}\int_{\R^n}\left(|\nabla u^+|^2-\frac{h(p)}{|x|^2}{u^{+}}^{2}\right)dx\\&>&\frac{1}{n}(S_{h(p)})^{\frac{n}{2}}=D^*,
\end{eqnarray*}
where $S_{h(p)}$ is defined by \eqref{2.7}.\\
By the same way, we get
\begin{equation*}
J_\infty(u^-)=\frac{1}{n}\int_{\R^n}\left(|\nabla u^-|^2-\frac{h(p)}{|x|^2}{u^{-}}^{2}\right)dx>D^*
\end{equation*}
Thus, we obtain
\begin{equation*}
    J_\infty(u)=J_\infty(u^+)+J_\infty(u^-)>2D^*.
\end{equation*}
Now, we derive  from the above theorem the following corollaries
 \begin{Corollary} \label{cor1} Under conditions
 \begin{enumerate}
   \item $\mu\ge (nD^*)^{\frac{2}{n}}$,
   \item $0<1-h(p)K^2(n,-2)\le\frac{1}{2}$,
 \end{enumerate}
  every P-S sequence  of the functional $J_h$ at level $d$ with $D^*<d< 2D^*$ is relatively compact.
 \end{Corollary}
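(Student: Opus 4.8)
The plan is to feed the given P--S sequence into the Struwe-type decomposition of Theorem \ref{thm3.6} and to show that, at a level $d$ strictly between $D^*$ and $2D^*$, the two hypotheses force all bubbles to disappear, i.e. $k=\ell=0$ in \eqref{3.9}. Once that is established, \eqref{3.9} reduces to $u_m=u+\W_m$ with $\W_m\to0$ in $H^2_1(M)$, so $u_m\to u$ strongly and the sequence is relatively compact. Thus the whole argument becomes a bookkeeping on the energy identity \eqref{3.10}, namely $d=J_h(u)+\sum_{i=1}^{k}J_\infty(v_i)+\sum_{j=1}^{\ell}J(\nu_j)$, in which each summand must be bounded below.

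First I would record three lower bounds. For the weak limit $u$: since $u$ is a critical point of $J_h$, testing the equation with $u$ gives $J_h(u)=\frac1n\int_M|u|^{2^*}dv_g\ge0$, and when $u\neq0$ the definition of $\mu$ together with hypothesis (1) yields $\int_M|u|^{2^*}dv_g\ge\mu^{n/2}\ge nD^*$, whence $J_h(u)\ge D^*$. For a singular bubble $v_i$ (a nontrivial solution of $Eu_{h(p)}$) the same critical-point identity gives $J_\infty(v_i)=\frac1n\int_{\mathbb{R}^n}(|\nabla v_i|^2-\frac{h(p)}{|x|^2}v_i^2)dx$; by Terracini's classification \eqref{fams} every signed solution equals $\pm w_{h(p),\xi}$ and has energy exactly $D^*$, while the preceding Remark shows that any sign-changing solution has $J_\infty$-energy $>2D^*$. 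Hence $J_\infty(v_i)\ge D^*$, with the gap that $J_\infty(v_i)<2D^*$ already forces $J_\infty(v_i)=D^*$. For a regular bubble $\nu_j$ (a nontrivial solution of \eqref{eqr} placed at a point $x_0^j\neq p$) the critical-point identity gives $J(\nu_j)=\frac1n\int_{\mathbb{R}^n}|\nabla\nu_j|^2dx\ge\frac1n S_0^{\,n/2}$, where $S_0=1/K^2(n,2)$; the crucial point is the inequality $\frac1n S_0^{\,n/2}\ge 2D^*$.

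I expect this last inequality to be the main obstacle, and it is exactly where hypothesis (2) enters. Writing $t=1-h(p)K^2(n,-2)$ and using \eqref{2.7}, one computes $\frac{(1/n)S_0^{\,n/2}}{D^*}=\bigl(S_0/S_{h(p)}\bigr)^{n/2}=t^{-(n-1)/2}$, so $\frac1n S_0^{\,n/2}\ge2D^*$ is equivalent to $t^{(n-1)/2}\le\frac12$, i.e. to $t\le 2^{-2/(n-1)}$. Since $2^{-2/(n-1)}\ge 2^{-1}$ for every $n\ge3$, the hypothesis $0<t\le\frac12$ indeed guarantees $J(\nu_j)\ge2D^*$ for each regular bubble.

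With these bounds the conclusion is immediate arithmetic on \eqref{3.10}. Because $d<2D^*$ while every regular bubble contributes at least $2D^*$ and all remaining terms are nonnegative, no regular bubble can occur, so $\ell=0$; likewise two or more singular bubbles would force $d\ge 2D^*$, so $k\le1$. If $k=1$ and $u\neq0$ then $d\ge D^*+D^*=2D^*$, a contradiction; if $k=1$ and $u=0$ then $d=J_\infty(v_1)$, which by the gap above is either $D^*$ or $>2D^*$, contradicting $D^*<d<2D^*$. Hence $k=0$ as well, the decomposition collapses to $u_m\to u$ strongly in $H^2_1(M)$, and the P--S sequence is relatively compact.
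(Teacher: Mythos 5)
Your proof is correct and follows essentially the same route as the paper: feed the sequence into the decomposition of Theorem \ref{thm3.6} and rule out all bubbles by energy bookkeeping on \eqref{3.10}, using the Remark for sign-changing singular bubbles, hypothesis (1) for the weak limit, and hypothesis (2) for the regular bubbles. Your write-up is in fact more careful than the paper's one-sentence case analysis (in particular, in the case of a single non-sign-changing singular bubble with $u=0$ the contradiction really comes from $d=D^*$ rather than $d>2D^*$, which you handle correctly).
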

 \begin{proof} By the above theorem, there exist a critical point $u_o$ of $J_h$, a sequence of solutions $v_i$ of \eqref{eqs} and sequence of non trivial solutions $\nu_j$  such that up to a subsequence \eqref{3.9} and \eqref{3.10} hold. Suppose that $v_i\neq0$ for some $i$, either $v_i$ changes sign or  not, it must hold $d>2D^*$. Thus $v_i=0 \forall i$. Similarly, if there exists $\nu_j\neq0$, by condition (1) of the corollary will have also $ d>2D^*$. Therefore, all $\nu_j$ are null and thus $u_m$ converges strongly up to a subsequence in $H^2_1(M)$.
 \end{proof}
\begin{Corollary} \label{cor2} Suppose that  $\mu>(nD^*)^{\frac{2}{n}}$. Then,  for every  P-S sequence $u_m$ of $J_h$ at level $ D^*$, there exists a sequence of functions $w_m\in H^2_1(M)$ such that $w_n\to 0$ strongly in $H^2_1(M)$  and
\begin{equation*}
    u_m=w_m+\phi_{p,R_m\varrho},\varrho>0,
\end{equation*}
 where $\phi_{p,R_m\varrho}$ is the function defined by \eqref{4.23} with $q=p$ and $\xi= R_m\varrho$.
 \end{Corollary}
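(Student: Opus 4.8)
The plan is to apply the Struwe-type decomposition of Theorem \ref{thm3.6} at the precise level $d=D^*$ and to read off the structure of the concentration entirely from the energy identity \eqref{3.10}. Writing \eqref{3.9}--\eqref{3.10} for the given P-S sequence, I obtain a critical point $u$ of $J_h$, finitely many solutions $v_i\in D^{1,2}(\R^n)$ of \eqref{eqs} with $\lambda=h(p)$, and finitely many nontrivial solutions $\nu_j\in D^{1,2}(\R^n)$ of \eqref{eqr}, subject to
\begin{equation*}
D^*=J_h(u)+\sum_{i=1}^{k}J_\infty(v_i)+\sum_{j=1}^{\ell}J(\nu_j).
\end{equation*}
The entire argument then reduces to proving that the only admissible configuration compatible with this balance is $u\equiv 0$, $\ell=0$, $k=1$, with $v_1$ a fixed-sign (hence minimal) solution of \eqref{eqs}.

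The first step is to bound each contribution from below. If $u\not\equiv 0$, testing $DJ_h(u)=0$ against $u$ gives $\int_M(|\nabla u|^2-\frac{h}{\rho_p^2}u^2)\,dv_g=\int_M|u|^{2^*}dv_g$, so by the very definition of $\mu$ one has $\mu\le\big(\int_M|u|^{2^*}dv_g\big)^{2/n}$, whence $J_h(u)=\frac1n\int_M|u|^{2^*}dv_g\ge\frac1n\mu^{n/2}$; since we assume $\mu>(nD^*)^{2/n}$, this is the strict inequality $J_h(u)>D^*$. For a regular bubble, $J(\nu_j)=\frac1n\int_{\R^n}|\nabla\nu_j|^2dx\ge\frac1n S_0^{\,n/2}$, and from \eqref{2.7} together with $h(p)>0$ one has $S_0=1/K^2(n,2)>S_{h(p)}$, so $J(\nu_j)>\frac1n S_{h(p)}^{\,n/2}=D^*$. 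Finally, for a singular bubble $v_i$ the classification \eqref{fams} shows that every fixed-sign solution of \eqref{eqs} is extremal, so $J_\infty(v_i)=D^*$, while by the Remark preceding this corollary every sign-changing solution satisfies $J_\infty(v_i)>2D^*$.

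Combining these lower bounds with the energy identity pins down the configuration. Each regular bubble already costs strictly more than $D^*$, so $\ell=0$; a sign-changing $v_i$ costs more than $2D^*$ and two fixed-sign $v_i$'s cost $2D^*$, so $k\le 1$; and $J_h(u)>D^*$ whenever $u\not\equiv0$, so $u\equiv 0$. Since $D^*>0$ excludes $k=0$, exactly one singular bubble survives: $k=1$, with $v_1$ of fixed sign and $J_\infty(v_1)=D^*$, hence $v_1=\pm w_{h(p),\varrho}$ for some $\varrho>0$ by \eqref{fams}. The expansion \eqref{3.9} thereby collapses to a single concentration term built from $v_1$ at $p$ with scale $R^1_m$, plus the remainder $\W_m\to 0$ in $H^2_1(M)$.

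It remains to recognize this surviving term as $\phi_{p,R_m\varrho}$. For this I would invoke the scaling covariance of the extremal functions, which is immediate from \eqref{fams}:
\begin{equation*}
w_{h(p),\varrho}\big(R^{-1}y\big)=R^{\frac{n-2}{2}}\,w_{h(p),R\varrho}(y).
\end{equation*}
This identity exactly cancels the normalizing prefactor carried by the concentration term in \eqref{3.9} and converts it into the glued bubble $\phi_{p,R_m\varrho}$ of \eqref{4.23} (with $q=p$, $\xi=R_m\varrho$); setting $w_m:=\W_m$ yields $u_m=w_m+\phi_{p,R_m\varrho}$ with $w_m\to 0$ strongly in $H^2_1(M)$. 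I expect the principal difficulty to lie in precisely this last identification: one must reconcile the cut-off $\eta_r$, the normalization, and the scale $R_m\varrho$ between the abstract profile furnished by Theorem \ref{thm3.6} and the concrete test function $\phi_{p,R_m\varrho}$, and, if a genuinely positive $\phi$ is intended, remove the sign ambiguity in $v_1$. The energy accounting is itself robust; its two load-bearing points are the strict inequalities $J_h(u)>D^*$ (from $\mu>(nD^*)^{2/n}$) and $D^*<\frac1n S_0^{\,n/2}$ (from $h(p)>0$), which rule out, respectively, a nontrivial weak limit and any regular bubble.
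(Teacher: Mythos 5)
Your proof is correct and follows essentially the same route as the paper: apply the decomposition of Theorem \ref{thm3.6} at level $D^*$ and use the energy identity \eqref{3.10}, the hypothesis $\mu>(nD^*)^{\frac{2}{n}}$, Terracini's classification and the sign-change remark to force $u\equiv 0$, $\ell=0$, $k=1$ with a single fixed-sign profile, which rescales to $\phi_{p,R_m\varrho}$. Your write-up is in fact more complete than the paper's one-line argument, since you explicitly rule out the regular bubbles $\nu_j$ via $J(\nu_j)\ge \frac{1}{n}K(n,2)^{-n}>D^*$ (using $h(p)>0$) and you flag the genuine loose ends (the cut-off and normalization matching, and the sign ambiguity of $v_1$) that the paper leaves implicit.
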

 \begin{proof} First, the condition $\mu>(nD^*)^{\frac{2}{n}}$ prevents the existence of any critical point $u_o$ of $J_h$ with  $J_h(u_o)= D^*$. Thus, only one function  $v_i $ can be included in the decomposition expression of the above theorem and since this function cannot change sign, then it takes the form of \eqref{4.23} with $q=p$ and $\xi= R_m\varrho$.
 \end{proof}
\begin{Corollary} under   the following conditions
\begin{enumerate}
\item $1-h(p)K^2(n,-2)>0$,
\item $\mu< (nD^*)^{\frac{2}{n}}$,
 \end{enumerate}
there  exists  a non trivial critical point of $J_h$.
\end{Corollary}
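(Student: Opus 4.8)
The plan is to read hypothesis~(2) as the statement that the ground-state level of $J_h$ lies strictly below the energy $D^*$ of the cheapest bubble, and then to invoke the decomposition Theorem~\ref{thm3.6} to rule out any loss of compactness. In the present setting the quadratic form $Q(u)=\int_M\big(|\nabla u|^2-\frac{h}{\rho_p^2}u^2\big)dv_g$ is coercive on $H^2_1(M)$, so that $\mu>0$; this is ensured through the Hardy inequality~\eqref{2.3} once $1-h(p)K^2(n,-2)>0$, as in the framework of \cite{Madani}. Consequently $J_h$ has the mountain-pass geometry: it is bounded below by a positive constant on a small sphere and $J_h(tu)\to-\infty$ as $t\to+\infty$ along any $u$ with $Q(u)>0$.

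First I would pin down the relevant critical value. On the Nehari manifold $\mathcal{N}=\{u\neq 0:\ Q(u)=\int_M|u|^{2^*}dv_g\}$ one has $J_h(u)=\frac1n\int_M|u|^{2^*}dv_g$, and the fibering $t\mapsto J_h(tu)$ has a unique maximum, so the mountain-pass level coincides with $c=\inf_{\mathcal{N}}J_h$. A one-parameter scaling $u\mapsto tu$ then identifies this infimum with the spectral quantity $\mu$, giving $c=\frac1n\mu^{n/2}$.

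Next comes the energy comparison, where hypothesis~(2) enters. By the definition~\eqref{2.8} we have $(nD^*)^{2/n}=S_{h(p)}$, so $\mu<(nD^*)^{2/n}$ means $\mu<S_{h(p)}$ and hence $c=\frac1n\mu^{n/2}<\frac1n S_{h(p)}^{n/2}=D^*$, while $c>0$ since $\mu>0$. I would then produce, by Ekeland's variational principle on $\mathcal{N}$ (equivalently by the mountain-pass theorem), a Palais--Smale sequence $u_m$ for $J_h$ at the level $c$, which is bounded in $H^2_1(M)$ by coercivity of $Q$.

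Finally I would apply Theorem~\ref{thm3.6} to $u_m$. The energy identity~\eqref{3.10} reads $c=J_h(u_0)+\sum_i J_\infty(v_i)+\sum_j J(\nu_j)+o(1)$. Testing each limiting equation against its own solution gives $J_\infty(v_i)=\frac1n\int_{\R^n}|v_i|^{2^*}dx\ge\frac1n S_{h(p)}^{n/2}=D^*$ for every nontrivial $v_i$, since $S_{h(p)}$ is the best constant in~\eqref{2.6} (the sign-changing case being even more favourable by the Remark following Theorem~\ref{thm3.6}), while $J(\nu_j)\ge\frac1n S_0^{n/2}$ with $S_0=K^{-2}(n,2)\ge S_{h(p)}$ because $h(p)\ge 0$, whence $J(\nu_j)\ge D^*$ as well; moreover $J_h(u_0)=\frac1n\int_M|u_0|^{2^*}dv_g\ge 0$ as $u_0$ is a critical point. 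Since $c<D^*$, no bubble can occur: the indices in~\eqref{3.9} all vanish and $u_m\to u_0$ strongly in $H^2_1(M)$. Therefore $J_h(u_0)=c>0$, so $u_0\neq 0$ is the desired nontrivial critical point, i.e. a weak solution of~\eqref{eq1.1}. The only genuinely delicate points are the coercivity of $Q$ (hence $\mu>0$ and the boundedness of $u_m$), which is needed to trigger Theorem~\ref{thm3.6}, and the verification that $D^*$ is the smallest available bubble energy, resting on $S_{h(p)}\le S_0$, i.e. on $h(p)\ge 0$; the concentration analysis itself is already absorbed into the decomposition theorem.
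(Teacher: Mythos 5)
Your proposal is correct and follows essentially the same route as the paper: identify the Nehari infimum with $\frac{1}{n}\mu^{n/2}$ via the fibering/projection $\Phi$, use hypothesis (2) to place it strictly below $D^*$, extract a Palais--Smale sequence at that level by Ekeland's principle, and invoke the decomposition Theorem~\ref{thm3.6} to exclude all bubbles and obtain strong convergence to a nontrivial critical point. Your version is in fact slightly more careful than the paper's (you verify explicitly that each bubble costs at least $D^*$, using $S_{h(p)}\le S_0$, which tacitly relies on the standing assumption $h(p)>0$ from the definition of $D^*$), but the underlying argument is the same.
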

\begin{proof} Like in corollary \ref{cor1}, by applying theorem \ref{thm3.6}, it is not difficult to see that the P-S condition for the functional $J_h$ is satisfied for any level $d$ such that  $0<d<D^*$.\\
Consider $d=\inf_{\NN_h}J_h$, where $N_h$ is the Nehari manifold defined by \eqref{4.12'}. By applying the Ekland variational principle, we can obtain a P-S sequence on $\NN_h$  at level $d$ which is also a P-S sequence $u_m$ on $H^2_1(M)$.  It is clear that $d\ge\frac{1}{n} \mu ^{\frac{n}{2}}$. Let $u\in H^{2}_1(M)\setminus\{0\}$, then $\Phi(u)u\in \NN_h$, where $\Phi$ is defined by \eqref{4.17'}, and by homogeneity of
\begin{equation*}
I_h(u)=\frac{\int_M(|\nabla
u|^2-\frac{h}{\rho_p^2}u^2)dv_g}{(\int_M|u|^{2^*}dv_g)^{\frac{2}{2^*}}},
\end{equation*}
 since $\Phi(u)u\in \NN_h$, we get that
\begin{equation*}
I_h(u)=I_h(\Phi(u)u)=(nJ_h(\Phi(u)u))^{\frac{2}{n}}\ge (nd)^{\frac{2}{n}}.
\end{equation*}
Thus we get $\mu\ge (nd)^{\frac{2}{n}}$ and hence $ d=\frac{1}{n} \mu ^{\frac{n}{2}}$.Therfore, condition (2) of the corollary implies that $d<D^*$ and hence the P-S sequence $u_m$ converges up to a subsequence strongly in $H^2_1(M)$ to a critical point of $J_h$.
\end{proof}
\section{Construction of solutions }
In this section, we construct solutions of \eqref{eq1.1}as  critical points of the functional $J_h$. In searching critical points of the functional $J_h$, we just apply the following classical theorem.
 \begin{Theorem}\label{thm-L-S} Let $ J $ be  $ C^1$ real functional defined on a $ C^{1,1}$ Banach manifold $ N$. If $J$ is bounded from below on $N$ and satisfies the P-S condition then it has at least $Cat(J^c)$ critical points in $J^c$. \\
 Moreover, if $N$ is contractible  and $ Cat(J^c)>1$ then there exists at least one critical point $u\notin J^c$.
\end{Theorem}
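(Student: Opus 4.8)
The plan is to prove this by the classical Lusternik--Schnirelmann minimax scheme, so the whole argument rests on two pillars: the elementary set-theoretic properties of the category, and a deformation lemma adapted to the manifold $N$. Recall that $Cat$ is monotone ($A\subset B\Rightarrow Cat(A)\le Cat(B)$), subadditive, invariant under homotopy equivalence, and enjoys the neighborhood property: a closed set $A$ admits a relatively open neighborhood $U$ with $Cat(U)=Cat(A)$. The first genuine step is the deformation lemma. Since $J$ is only $C^1$ we cannot integrate its gradient; instead I would construct a locally Lipschitz pseudo-gradient vector field for $J$ on the regular set $N\setminus K$ ($K$ the critical set), which is possible precisely because $N$ is a $C^{1,1}$ manifold. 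Integrating minus this field yields a flow $\eta(t,\cdot)$ along which $J$ decreases, and the Palais--Smale condition guarantees that, in the absence of critical values in a closed interval $[a,b]$, the flow carries $J^b$ into $J^a$ in finite time; more generally it pushes a neighborhood of any regular level strictly downward.

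With the deformation lemma available, set $m=Cat(J^c)$ and define, for $1\le k\le m$, the minimax values
\begin{equation*}
c_k=\inf\Big\{\,\sup_{u\in A}J(u)\ :\ A\subset J^c \text{ closed},\ Cat_{J^c}(A)\ge k\,\Big\}.
\end{equation*}
Because $J^c$ itself is admissible for $k=m$ and $\sup_{J^c}J\le c$, these values are finite, and boundedness from below gives $c_k>-\infty$; monotonicity of $Cat$ yields $c_1\le c_2\le\cdots\le c_m\le c$. A standard argument shows each $c_k$ is a critical value: were some $c_k$ regular, the deformation lemma combined with the neighborhood and monotonicity properties of $Cat$ would produce an admissible set of strictly smaller supremum, contradicting the definition of $c_k$. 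When several consecutive values coincide, $c_k=\cdots=c_{k+p}=:c^*$, the same deformation-plus-category estimate forces $Cat(K_{c^*})\ge p+1$ for the critical set $K_{c^*}$ at that level, so that level carries at least $p+1$ critical points. Summing over the distinct values produces at least $m=Cat(J^c)$ critical points lying in $J^c$, which is the first assertion.

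For the ``moreover'' part, suppose $N$ is contractible and $Cat(J^c)>1$, and argue by contradiction assuming $J$ has no critical point outside $J^c$, i.e. no critical value exceeds $c$. Then, invoking the Palais--Smale condition, boundedness from below, and the globally defined decreasing flow, the whole manifold $N$ can be deformed into $J^c$; by homotopy invariance of the category this gives $Cat(N)\ge Cat(J^c)>1$. But a contractible space has $Cat(N)=1$, a contradiction. Hence there must exist a critical point $u\notin J^c$.

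The main obstacle is the deformation lemma on the $C^{1,1}$ Banach manifold $N$: one must build a pseudo-gradient field that is simultaneously locally Lipschitz (so its flow exists and is unique), tangent to $N$, and compatible with the Palais--Smale compactness, and then verify that the flow truly crosses every regular level and deforms sublevel sets as claimed. Once the category estimate $Cat(K_{c^*})\ge p+1$ at coincident minimax levels is secured, the remaining counting is routine. Since the statement is the classical Lusternik--Schnirelmann theorem, I would mainly reproduce this scheme and refer to \cite{Amrosetti} for the full details.
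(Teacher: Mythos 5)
First, a point of calibration: the paper does not actually prove this statement -- it is introduced as ``the following classical theorem'' and used as a black box, with \cite{Amrosetti} (and, for this abstract multiplicity scheme, \cite{Benci}) as the implicit references. So there is no in-paper argument to compare yours against, and what you have written is the standard Lusternik--Schnirelmann minimax proof. The first part of your sketch (the values $c_k$ defined by minimaxing over closed sets of category at least $k$, the deformation lemma built from a locally Lipschitz pseudo-gradient on the $C^{1,1}$ manifold, and the estimate that a $(p+1)$-fold coincident level carries a critical set of category at least $p+1$) is the accepted argument; the only routine caveat is that $J^c=\{u:J(u)<c\}$ is open, so the admissibility of the class at $k=Cat(J^c)$ needs the usual approximation by closed sublevels or the neighborhood property of the category.

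The one step that does not work as written is the ``moreover'' part. From ``$N$ can be deformed into $J^c$'' you cannot conclude $Cat(N)\ge Cat(J^c)$: monotonicity plus the deformation property give the \emph{opposite} inequality $Cat(N)\le Cat_N(J^c)\le Cat(J^c)$, which is no contradiction, and the inequality you want is simply false for a deformation that is not a retraction (e.g.\ $\mathbb{R}^2$ deforms into a two-point subset, yet has category $1$). To make your contradiction work you must produce a genuine \emph{deformation retraction} of $N$ onto the sublevel set, so that homotopy invariance yields $Cat(J^c)=Cat(N)=1$; since $J$ is unbounded above on $N$ (as it is on the Nehari manifold where the theorem is applied in this paper), that requires normalizing the pseudo-gradient so the flow is globally defined, using the P-S condition and the absence of critical values above $c$ to show every trajectory enters the sublevel set in finite time, and checking that the entry time depends continuously on the initial point. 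Alternatively -- and this is the route usually taken, e.g.\ in \cite{Benci} -- one runs a second minimax over the class of contractions of $J^c$ in $N$, a class which is nonempty precisely because $N$ is contractible; its value is at least $c$ because $Cat(J^c)>1$ forbids contracting $J^c$ inside itself, and P-S makes it a critical value, which produces the critical point outside $J^c$ directly. Either repair is standard, but as stated your category inequality points the wrong way.
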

 $J^c$ in the theorem denotes  the sub-level set of the functional $ J $
\begin{equation*}
    J^c=\{\ u\in N: J(u)<c\}.
\end{equation*}
 and $Cat(J^c)$ denotes the  Lusternik-Schnirelmann category of the set $ J^c$.\\
 We recall that the Lusternik-Schnirelmann category  $ Cat_Y(X) $ of a topological space $X$ with respect to a topological space $Y$ with  $X\subset Y$ is the least  integer $k\le\infty$  such that there exists an open covering of $U_i$ of $X$ with  each $U_i$  contractible in $Y$. If $X=Y$, we put $Cat_X(X)=Cat(X)$.\\
Consider the Nehari manifold $\NN_h$ which associated to the functional $J_h$
 \begin{equation}\label{4.12'}
    \NN_h=\{u\in  H^2_1(M)\setminus\{0\}, DJ_h(u).u=0\}.
 \end{equation}
It is well known that this manifold defines a natural constraint set for the functional $J_h$ in the sense that a P-S sequence in $\NN_h $ is also a P-S of $J_h$ on $ H^2_1(M)$. Moreover, for $u \in H^2_1(M)\setminus\{0\}$, we have $\sup_{t>0}(tu)=t_ou$ with  $t_o =\left(\frac{\int_M(|\nabla u|^2-\frac{h}{\rho_p^2}u^2)dv_g}{(\int_M|u|^{2^*}dv_g)^{\frac{2}{2^*}}}\right)^{\frac{n-2}{4}}$ and $t_ou\in \NN_h.$\\
Note that if the function $h$ changes sign only once and $1-h(p)K(n,-2)^2>0$, then $ \int_M(|\nabla u|^2-\frac{h}{\rho_p^2}u^2)dv_g>0$. In  fat, let $\delta_0=\max \delta$ such that $h(x)\ge0 $ on $B(p,\delta)$. Without loss of generality we assume that $h(p)=\max_M(h)$, by inequality \eqref{2.4} we have
\begin{eqnarray*}
\int_M(|\nabla u|^2-\frac{h}{\rho_p^2}u^2)dv_g&\ge& \int_M|\nabla u|^2dv_g-\max_Mh\int_{B(p,\delta)}\frac{u^2}{\rho_p^2}dv_g\\&\ge& (1-h(p)K_\delta(n,-2)^2)\int_M|\nabla u|^2dv_g.
\end{eqnarray*}
Since $K_\delta(n,-2)\to K(n,-2)$ as $\delta \to0$ and $1-h(p)K(n,-2)^2>0$, we get the claim true.\\
The main difficulty in applying  theorem \ref{thm-L-S} above is that the P-S condition for the functional $J_h$ is not satisfied for any level because of the presence of the critical exponent $2^*$  and  the critical singular term. However, corollary \ref{cor1} gives  level rank for which P-S condition is satisfied and consequently the P-S sequence levels  would  be restricted to this level rank. We therefore construct a subset of the manifold $\NN_h$ on which the P-S condition is satisfied. It seems that the 'test' functions defined by \eqref{4.17} play an important role in the construction of such subset.\\
We can assume by the Nash embedding theorem, without loss of generality, that the Riemannian manifold $ M$ is embedded in some Euclidean space $\mathbb{R}^N$.\\
Let  $M_{ r}$ be the set
\begin{equation*}
    M_{r}=\{ x\in \mathbb{R}^N: d(x,M)<r\},
\end{equation*}
and define the radius of the topological invariance $r_M$ of $M$ by
\begin{equation*}
    r_M=\sup \{ r>0: Cat(M_r)=Cat(M)\}.
\end{equation*}
For $\varepsilon>0$, let $ g(\varepsilon) $  be a positive function such that $g(\varepsilon)\to 0$ as $\varepsilon\to 0$. Let $\Sigma_\varepsilon$ be the subset of $\NN_h$ defined by
\begin{equation*}
\Sigma_\varepsilon=\{u\in  \NN_h \text{ s.t }  D^*<J_h(u)<D^*+g(\varepsilon) \text{ for some } g(\varepsilon)\}.
\end{equation*}
To prove the main theorem,  we construct to continuous maps $\I_\varepsilon: M\rightarrow \Sigma_\varepsilon$ and $\beta: \Sigma_\varepsilon\rightarrow M_{r_M}$ such that the composition $\beta o\I_\varepsilon$ is homotopic to the identity. This leads, by the Lusternik-Schnirelmann properties (see \cite{Amrosetti} for example) that $ Cat(\Sigma_\varepsilon)>Cat(M)$. Thus by applying theorem \ref{thm-L-S} on the set $\Sigma_\varepsilon$, we obtain at least $Cat(M)$ critical points of the functional $J_h$ in $\Sigma_\varepsilon$. Finally, we end the proof of main theorem by proving the existence on another critical point $u\notin \Sigma_\varepsilon$. This can be done by constructing a contractible set $P_\varepsilon$ that contains $\I_\varepsilon(M)$ and is contractible in $\NN_h\cap J^{C_\varepsilon}$ for  bounded $C_\varepsilon$. \\
 First, we have to prove  that the set $ \Sigma_\varepsilon $ is not empty. This is achieved in lemma \eqref{Lem-expan} below.\\
Let $q\in M$ be any point of $M$ and $0<\delta<\frac{\delta_g}{2}$. Define a cut-off function  on $M$, $\eta_{q,\delta} $,  such that $0\le\eta_{q,\delta}\le1$ , $\eta_{q,\delta}(x)=1, x\in B(q,\delta), \eta_{q,\delta}(x)=0, x\in M\setminus\in B(q,2\delta)$ and $|\nabla\eta_{q,\delta}|\le C$, for some constant $C>0$.\\
 Put $\rho_p(x)=r$ and  consider  on $M$ the function
 \begin{equation*}
    \phi_\varepsilon(x)= C(n,a)
    \eta_{p,\delta}\left(\frac{\varepsilon^{a}r^{a-1}}{\varepsilon^{2a}+r^{2a}}\right)^{\frac{n}{2}-1},
\end{equation*}
where
\begin{equation}\label{4.12}
 C(n,a)= (a^2n(n-2))^{\frac{n-2}{4}} \text{ and  } a=\sqrt{1-h(p)K(n,2,-2)^2}
\end{equation}
Define the constants
\begin{eqnarray}\label{4.13}
\nonumber C_1(n,a)&=&\frac{1}{6}C(n,a)^2(\frac{n-2}{2})^2w_{n-1}[(a-1)^2+2(1-a)\frac{a(n-2)+2}{an-2}
\\  &+&(1+a)^2\frac{(an+2)(a(n-2)+2)}{(an-2)(a(n-2)-2)}]
\end{eqnarray}.
\begin{equation}\label{4.14}
C_2(n,a)=C(n,a)^2\frac{4a^2w_{n-1}(n-2)(n-1)}{(a(n-2)-2)(an-2)}
\end{equation}
\begin{equation}\label{4.15}
C_3(n,a)=C(n,a)^{2^*}\frac{w_{n-1}(a(n-2)+2)n}{6(an-2)}.
\end{equation}
Let the constant
\begin{equation}\label{4.16}
     A(n,a)= \frac{6(\frac{n-2}{n}C_3(n,a)-C_1(n,a))}{C_2(n,a)}
\end{equation}
Consider the projection $\Phi:H^2_1(M)\setminus\{0\} \rightarrow \NN_h$ defined by
 \begin{equation} \label{4.17'}
    \Phi(u)= \left(\frac{\int_M(|\nabla u|^2-\frac{h}{\rho_p^2}u^2)dv_g}{(\int_M|u|^{2^*}dv_g)^{\frac{2}{2^*}}}\right)^{\frac{n-2}{4}}u
 \end{equation}
  In the remaining of the paper, $\alpha(\varepsilon)$ is a function such that $\alpha(\varepsilon)>0$ and $ \alpha(\varepsilon)\to0$ as $\varepsilon\to 0$.\\
In \cite{Maliki}, we have proved the following lemma. For completeness, we review briefly the proof
\begin{Lemma} \label{Lem-expan}Suppose that
\begin{eqnarray*}
   \label{dim1}n=dim(M)>2+\frac{2}{a},&& \text{ and }  \\
   \label{dim2}(A(n,a)+h(p))Scal_g(p)-6\frac{\Delta h(p)}{n}>0.
\end{eqnarray*}
Then, there exists $ g(\varepsilon)>0$ with $g(\varepsilon)\to 0 $ as $\varepsilon\to 0$ such that
\begin{equation}\label{4.17}
 D^*<J_h(\Phi(\phi_\varepsilon))<D^*+g(\varepsilon), g(\varepsilon)>0, \forall \varepsilon.
\end{equation}
\end{Lemma}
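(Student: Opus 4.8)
The plan is to reduce the statement to a single scale-invariant quotient by exploiting that $\Phi$ projects onto the Nehari manifold $\NN_h$. Since $I_h$ is $0$-homogeneous and $\Phi(\phi_\varepsilon)\in\NN_h$, the same computation used for the corollaries of Section 3 gives
\[
J_h(\Phi(\phi_\varepsilon))=\tfrac1n\, I_h(\phi_\varepsilon)^{n/2},\qquad
I_h(\phi_\varepsilon)=\frac{\int_M\big(|\nabla \phi_\varepsilon|^2-\frac{h}{\rho_p^2}\phi_\varepsilon^2\big)dv_g}{\big(\int_M|\phi_\varepsilon|^{2^*}dv_g\big)^{2/2^*}} .
\]
As $D^*=\tfrac1n S_{h(p)}^{n/2}$ by \eqref{2.8}, the desired estimate \eqref{4.17} is equivalent to the two-sided bound $S_{h(p)}<I_h(\phi_\varepsilon)=S_{h(p)}+o(1)$, in which the correction is strictly positive and tends to $0$ as $\varepsilon\to0$. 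Hence everything rests on a sufficiently sharp asymptotic expansion of the three integrals defining $I_h(\phi_\varepsilon)$.

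I would carry out this expansion in geodesic normal coordinates centred at $p$, where $\phi_\varepsilon$ is, up to the cut-off $\eta_{p,\delta}$, exactly the Terracini extremal of \eqref{fams} with $\xi=\varepsilon$ and $a_\lambda=a$. At leading order each integral reproduces its Euclidean counterpart for this extremal, and by \eqref{2.6}--\eqref{2.7} the quotient of these leading terms is precisely $S_{h(p)}$. The natural scaling $r=\varepsilon s$ shows that $\int_M|\nabla\phi_\varepsilon|^2$, $\int_M \rho_p^{-2}\phi_\varepsilon^2$ and $\int_M|\phi_\varepsilon|^{2^*}$ are all $O(1)$, while $\int_M\phi_\varepsilon^2=O(\varepsilon^2)$; this last fact fixes the order of the corrections at $\varepsilon^2$.

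For the second-order terms I would expand $dv_g=\big(1-\tfrac{Scal_g(p)}{6n}r^2+O(r^3)\big)dx$ together with the metric in the Dirichlet term, and Taylor-expand $h(x)=h(p)+\partial_ih(p)x^i+\tfrac12\partial_{ij}h(p)x^ix^j+\dots$: the linear term drops out by radial symmetry, and the quadratic term averages to $\tfrac{\Delta h(p)}{2n}\int_M\phi_\varepsilon^2$. After rescaling, the surviving radial integrals are the Beta-type constants collected in \eqref{4.13}--\eqref{4.15}, namely $C_1(n,a)$ from the curvature correction of the Dirichlet energy, $C_2(n,a)$ of $L^2$-profile type carrying both the $Scal_g(p)$ weight of the Hardy term and the $\Delta h(p)$ contribution, and $C_3(n,a)$ from the $L^{2^*}$ term (the exponent $2/2^*=(n-2)/n$ explaining its weight in the final combination). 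Assembling and dividing, I expect
\[
I_h(\phi_\varepsilon)=S_{h(p)}\Big(1+c(n,a)\big[(A(n,a)+h(p))Scal_g(p)-6\tfrac{\Delta h(p)}{n}\big]\varepsilon^2+o(\varepsilon^2)\Big),
\]
with $c(n,a)>0$ and $A(n,a)$ exactly as defined in \eqref{4.16}, the definition being arranged precisely so that the bracket records the sign of the $\varepsilon^2$-coefficient. Hypothesis (2) then makes the bracket positive, yielding $I_h(\phi_\varepsilon)>S_{h(p)}$ for small $\varepsilon$ and hence the lower bound $D^*<J_h(\Phi(\phi_\varepsilon))$; the upper bound follows on taking $g(\varepsilon)$ to be a positive multiple of $\varepsilon^2$, which tends to $0$.

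The main obstacle is the bookkeeping of orders in this last step. First, the finiteness of $C_2(n,a)$ --- equivalently the convergence of the rescaled $L^2$-integral of the profile --- requires $a(n-2)>2$, i.e. $n>2+\tfrac2a$, which is exactly hypothesis (1); the denominators $a(n-2)-2$ and $an-2$ appearing in \eqref{4.13}--\eqref{4.15} are precisely these convergence thresholds. Second, one must verify that truncation by $\eta_{p,\delta}$ alters each integral only through a tail supported in $\{\delta<r<2\delta\}$, which the large-$s$ decay of the profile makes of size $O(\varepsilon^{a(n-2)})$, hence $o(\varepsilon^2)$ again under hypothesis (1), so that it does not pollute the $Scal_g(p)$ and $\Delta h(p)$ corrections. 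Checking that every competing error term is genuinely of order higher than $\varepsilon^2$, and that the surviving $\varepsilon^2$-coefficient is exactly the sign-determining combination of hypothesis (2), is the delicate part of the argument.
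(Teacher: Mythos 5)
Your proposal follows essentially the same route as the paper: expand $\int_M|\nabla\phi_\varepsilon|^2$, $\int_M h\rho_p^{-2}\phi_\varepsilon^2$ and $\int_M|\phi_\varepsilon|^{2^*}$ to order $\varepsilon^2$ in normal coordinates (the paper imports these expansions \eqref{4.18}--\eqref{4.20} from its reference rather than rederiving them), form the Rayleigh quotient whose leading term is $S_{h(p)}=(nD^*)^{2/n}$, and read off that hypothesis (2) makes the $\varepsilon^2$-coefficient positive while hypothesis (1) guarantees convergence of the radial integrals and smallness of the cut-off tail. Your bookkeeping of where $C_1$, $C_2$, $C_3$ and $A(n,a)$ arise matches \eqref{4.21}--\eqref{4.22}, so the argument is correct and not materially different from the paper's.
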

\begin{proof}
 Define for $2a\beta-1>\alpha>0$
 \begin{equation*}
   I^\alpha_{\beta}= \int_0^{\infty}\frac{r^{\alpha}}{(1+ r^{2a})^\beta}dr.
\end{equation*}
Then, by direct computations ( see \cite{Maliki})  one can get
\begin{eqnarray}\label{4.18}
&&\int_M|\nabla\phi_\varepsilon|^2dv_g\\ \nonumber &=&\int_{\R^n}|\nabla U|^2dx -Scal_g(p)C_1(n,a)
I_n^{a(n-2)+1}\varepsilon^2+o(\varepsilon^2)+\alpha(\varepsilon).
\end{eqnarray}
Similarly, by writing
\begin{equation*}
    h(x)=h(p)+(\nabla_ih)(p)x_i+(\nabla_{i,j}h)(p)x_ix_j+o(r^2)
\end{equation*}
  we obtain
\begin{eqnarray}\label{4.19}
&&\int_M\frac{h(x)}{r^2}\phi_\varepsilon^2dv_g\\ \nonumber &=& h(p)\int_{\R^N} \frac{U^2}{|x|^2}dx-[Scal_g(p)h(p)-\frac{\Delta h(p)}{n}]C_2(n,a)I^{a(n-2)+1}_{n}\varepsilon^2\\ \nonumber&+&o(\varepsilon^2)+\alpha(\varepsilon)],
\end{eqnarray}
For the term term $\int_M|\phi_\varepsilon|^{2^*}dv_g$, one can obtain
\begin{eqnarray}\label{4.20}
&&\int_M|\phi_\varepsilon|^{2^*}dv_g\\ \nonumber&=& \int_{\R^n}|U|^{2^*}dx-Scal_g(p)C_3(n,a)I_n^{a(n-2)+1}\varepsilon^2+o(\varepsilon^2)
\\&+& \nonumber\alpha_3(\varepsilon),
\end{eqnarray}
with $\lim_{\varepsilon\to0}\alpha(\varepsilon)=0$.\\
Using the fact that
\begin{equation*}
    \frac{\int_{\R^n}(|\nabla U|^2-h(p)\frac{U^2}{|x|^2})dx}{(\int_{\R^n}|U|^{2^*}dx)^{\frac{2}{2^*}}}=
    \frac{(1-h(p)K(n,2,-2)^2)^{\frac{n-1}{n}}}{K(n,2)^2}=(nD^*)^{\frac{2}{n}},
\end{equation*}
the expansions \eqref{4.18}, \eqref{4.19} and \eqref{4.20} yield
\begin{eqnarray*}
    &&\frac{ (\int_M|\nabla\phi_\varepsilon|^2-\frac{h}{r^2}\phi_\varepsilon^2)dv_g}
    {(\int_M|\phi_\varepsilon|^{2^*}dv_g)^{\frac{2}{2^*}}}\\&=&(nD^*)^{\frac{2}{n}}(1+
    \frac{1}{\int_{\R^n}|U|^{2^*}dx}[Scal_g(p)(\frac{n-2}{n}C_3(n,a)-C_1(n,a))\\&+&(\frac{Scal_g(p)h(p)}{6}-\frac{\Delta h(p)}{n})C_2(n,a))]I_n^{a(n-2)+1}\varepsilon^2+o(\varepsilon^2)+\alpha(\varepsilon).
\end{eqnarray*}
with $\lim_{\varepsilon\to0}\alpha(\varepsilon)=0$.\\
Now, writing
\begin{equation*}
    nJ(\Phi(\phi_\varepsilon))=\left(\frac{ (\int_M|\nabla\phi_\varepsilon|^2-\frac{h}{r^2}\phi_\varepsilon^2)dv_g}
    {(\int_M|\phi_\varepsilon|^{2^*}dv_g)^{\frac{2}{2^*}}}\right)^{\frac{n}{2}}
\end{equation*}
 we obtain
 \begin{eqnarray*}
     &&J_h(\Phi(\phi_\varepsilon))\\&=&D^*(1+
    \frac{n}{2(\int_{\R^n}|U|^{2^*}dx)^{\frac{n}{2}}}[Scal_g(p)((\frac{n-2}{n}C_3(n,a)-C_1(n,a))\\&+&(\frac{Scal_g(p)h(p)}{6}-\frac{\Delta h(p)}{n})C_2(n,a))]I_n^{a(n-2)+1}\varepsilon^2+o(\varepsilon^2)+\alpha(\varepsilon).
 \end{eqnarray*}
That is
 \begin{eqnarray}\label{4.21}
 &&\nonumber J_h(\Phi(\phi_\varepsilon))\\&=&D^*\left[1+
    B(n,a)\left((A(n,a)+h(p))Scal_g(p)-6\frac{\Delta h(p)}{n}\right)\varepsilon^2\right]\\ \nonumber&+&o(\varepsilon^2)+\alpha(\varepsilon)\nonumber.
 \end{eqnarray}
 with \begin{equation}\label{4.22}
 B(n,a)=\frac{n}{12C_2(n,a)(\int_{\R^n}|U|^{2^*}dx)^{\frac{n}{2}}}.
 \end{equation}
Therefore, if
 \begin{equation*}
(A(n,a)+h(p))Scal_g(p)-6\frac{\Delta h(p)}{n}>0,
 \end{equation*}
 for $\varepsilon $ small enough, we get \eqref{4.17}.
\end{proof}
\subsection{ The map $\I_\varepsilon$.} In this subsection, we construct a continuous map $\I_\varepsilon: M\rightarrow \Sigma_\varepsilon $.
For a fixed point $q\in M$ put $r_q(x)=dist_g(q,x), x\in M$ and let $\phi_{q,\xi} $ be the function
\begin{equation}\label{4.23}
    \phi_{q,\xi}(x)= C(n,a)
    \eta_{q,\delta}\left(\frac{\xi^{a}r_q(x)^{a-1}}{\xi^{2a}+r_q(x)^{2a}}\right)^{\frac{n}{2}-1},\xi>0,
\end{equation}
 where $a$ and $c(n,a)$ are defined by \eqref{4.12}.\\
For $\varepsilon\in (0,1)$, define the function $\mathcal{I_\varepsilon}:M\to \NN_h$ by
\begin{equation*}
\mathcal{I_\varepsilon}(q)=\Phi((1-\varepsilon)\phi_{p,\varepsilon}+\varepsilon\phi_{q,\varepsilon}).
\end{equation*}
Let us  prove the following lemma
\begin{Lemma}\label{lem-inj} The function $\mathcal{I}_\varepsilon$ is continuous, and under the conditions
\begin{enumerate}
  \item $ a(n-2)>2$,
\item $(A(n,a)+h(p))Scal_g(p)-6\frac{\Delta h(p)}{n}>0$,
\end{enumerate}
$\mathcal{I}(q)\in \Sigma_\varepsilon$ for all $q\in M$.
\end{Lemma}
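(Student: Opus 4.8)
The plan is to prove the two assertions separately: the continuity of $\mathcal{I}_\varepsilon$, and the energy localization $\mathcal{I}_\varepsilon(q)\in\Sigma_\varepsilon$, the latter being reduced to a uniform-in-$q$ expansion of the Rayleigh quotient evaluated at $w_q:=(1-\varepsilon)\phi_{p,\varepsilon}+\varepsilon\phi_{q,\varepsilon}$. For continuity I would first check that $q\mapsto\phi_{q,\varepsilon}$ is continuous from $M$ into $H^2_1(M)$: the formula \eqref{4.23} depends on $q$ only through the Lipschitz map $(q,x)\mapsto r_q(x)=dist_g(q,x)$ and through the cut-off $\eta_{q,\delta}$, and the family is bounded in $H^2_1(M)$ with equi-integrable gradient, so continuity passes through the norm by dominated convergence. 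Hence $q\mapsto(1-\varepsilon)\phi_{p,\varepsilon}+\varepsilon\phi_{q,\varepsilon}$ is continuous into $H^2_1(M)\setminus\{0\}$. Since $h$ changes sign once and $1-h(p)K(n,-2)^2>0$, the remark preceding \eqref{4.12'} gives $\int_M(|\nabla u|^2-\frac{h}{\rho_p^2}u^2)dv_g>0$ for every such $u$, so the projection $\Phi$ of \eqref{4.17'} is a continuous nonvanishing map on this set, and $\mathcal{I}_\varepsilon=\Phi\circ\big((1-\varepsilon)\phi_{p,\varepsilon}+\varepsilon\phi_{q,\varepsilon}\big)$ is continuous.

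For the membership, I would use $\mathcal{I}_\varepsilon(q)\in\NN_h$ together with the identity $J_h(\Phi(v))=\tfrac1n I_h(v)^{n/2}$ already exploited in the corollaries of Section 3, where $I_h(v)=\frac{\int_M(|\nabla v|^2-\frac{h}{\rho_p^2}v^2)dv_g}{(\int_M|v|^{2^*}dv_g)^{2/2^*}}$. Because $D^*=\tfrac1n S_{h(p)}^{n/2}$ and $S_{h(p)}=(nD^*)^{2/n}$ by \eqref{2.7}--\eqref{2.8}, the claim $D^*<J_h(\mathcal{I}_\varepsilon(q))<D^*+g(\varepsilon)$ is equivalent to
$$S_{h(p)}<I_h(w_q)<S_{h(p)}+\tilde{g}(\varepsilon)$$
for a suitable $\tilde{g}(\varepsilon)\to0$, uniformly in $q\in M$. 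It therefore suffices to expand the numerator and denominator of $I_h(w_q)$. Observe that for $q=p$ one has exactly $w_p=\phi_{p,\varepsilon}$, so that case is precisely Lemma \ref{Lem-expan}.

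For the upper bound I would exploit the small weight $\varepsilon$ on the second bubble: expanding $|\nabla w_q|^2$, the Hardy term and $\int_M|w_q|^{2^*}dv_g$, the contribution of $\varepsilon\phi_{q,\varepsilon}$ is $O(\varepsilon^2)$ (condition $a(n-2)>2$ guarantees $\int_M\phi_{q,\varepsilon}^2dv_g=O(\varepsilon^2)$ and, more generally, the convergence of the profile integrals $I^\alpha_\beta$ that appear), while the cross terms are controlled by the bubble-interaction estimate and are $o(1)$ uniformly in $q$ (here $n>2+2/a>4$ makes the off-diagonal decay harmless). Combined with the single-bubble expansion of Lemma \ref{Lem-expan}, whose $\varepsilon^2$-coefficient is positive exactly under condition (2), this yields $I_h(w_q)\le S_{h(p)}+\tilde{g}(\varepsilon)$; since $\Sigma_\varepsilon$ only requires $g(\varepsilon)\to0$, even an $O(\varepsilon)$ remainder is admissible. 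For the lower bound I would use that $\phi_{q,\varepsilon}$ concentrates at the regular point $q$, where the Hardy potential is bounded, so its own quotient is asymptotically $\ge S>S_{h(p)}$, the strict gap coming from $S_{h(p)}=(1-h(p)K(n,-2)^2)^{(n-1)/n}S$ via \eqref{2.7}. The elementary two-bubble inequality
$$\frac{S_1 x^{(n-2)/n}+S_2 y^{(n-2)/n}}{(x+y)^{(n-2)/n}}\ge S_1,$$
valid for $S_1\le S_2$ with equality only at $y=0$, applied with $S_1=S_{h(p)}$, shows that any admixture of the $q$-bubble strictly raises $I_h(w_q)$ above $S_{h(p)}$; the regime $q\to p$ is then covered by continuity and Lemma \ref{Lem-expan} (where condition (2) supplies the strict inequality at $q=p$ itself).

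The main obstacle is the uniformity in $q$, and specifically the transition regime where $dist_g(p,q)$ is of an intermediate order in $\varepsilon$: there the two bubbles genuinely overlap and the $q$-bubble partially feels the Hardy singularity at $p$, so both the cross-term bounds in the upper estimate and the strict gap $I_h(w_q)>S_{h(p)}$ in the lower estimate must be secured with constants independent of $q$. I expect the argument to require splitting $M$ into the three zones $dist_g(p,q)\le L\varepsilon$, $L\varepsilon\le dist_g(p,q)\le\delta$, and $dist_g(p,q)\ge\delta$, and estimating the interaction integrals separately in each; this case analysis is where the bulk of the computation, and the real difficulty, lies.
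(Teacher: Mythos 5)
Your continuity argument coincides with the paper's: continuity of $q\mapsto\phi_{q,\varepsilon}$ into $H^2_1(M)$ via pointwise convergence of $U_{q_j}$ and $\eta_{q_j,\delta}$ plus dominated convergence, composed with the continuous projection $\Phi$. That half is fine.

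For the membership $\mathcal{I}_\varepsilon(q)\in\Sigma_\varepsilon$ you take a genuinely different and much harder route, and its decisive step is not carried out. The paper's proof rests on a simplification you do not use: for $q\neq p$ the cut-off radius $\delta$ is taken so small that $B(p,2\delta)\cap B(q,2\delta)=\emptyset$, so $\phi_{p,\varepsilon}$ and $\phi_{q,\varepsilon}$ have \emph{disjoint supports}. Every cross term you propose to estimate then vanishes identically: the numerator of the Rayleigh quotient splits as $(1-\varepsilon)^2(\cdots)+\varepsilon^2(\cdots)$, the denominator as $\bigl((1-\varepsilon)^{2^*}\int_M|\phi_{p,\varepsilon}|^{2^*}dv_g+\varepsilon^{2^*}\int_M|\phi_{q,\varepsilon}|^{2^*}dv_g\bigr)^{2/2^*}$, and everything reduces to the single-bubble expansions of Lemma \ref{Lem-expan} applied at $p$ and at $q$, condition (1) entering only to show $\int_M\frac{|h|}{r_p^2}\phi_{q,\varepsilon}^2\,dv_g=O(\varepsilon^2)$. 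The lower bound $J_h>D^*$ then comes from the explicit positive $\varepsilon^2$-coefficient $\hbar(\varepsilon)>0$ (condition (2) together with the positive contribution $\frac{\varepsilon^2}{(1-\varepsilon)^2}\int_{\R^n}|\nabla U|^2dx$ of the $q$-bubble), not from a two-bubble convexity inequality. By contrast, your plan hinges on a three-zone analysis of the bubble interaction in the intermediate regime $L\varepsilon\le dist_g(p,q)\le\delta$, which you yourself identify as ``where the real difficulty lies'' and leave unexecuted; as written the proposal therefore does not establish $D^*<J_h(\mathcal{I}_\varepsilon(q))<D^*+g(\varepsilon)$. Your worry about the overlap regime is not baseless --- the paper's choice of $\delta$ depending on $q$ quietly raises its own uniformity question as $q\to p$ --- but the intended argument avoids the interaction analysis altogether, and you should either adopt the disjoint-support device or actually perform the interaction estimates you only announce.
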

\begin{proof} By continuity of the projection $\Phi: H^2_1(M)(u)\setminus\{0\}\to\NN_h$, in order to prove the continuity of the  function $\I_\varepsilon(q)$, we need to just  prove the continuity of the function $ \phi_{q,\varepsilon}$ with respect to $q$.\\
Let  $q_j$ be a sequence of points of $M$ that converges to $q$ and prove that
\begin{equation*}
     \phi_{q_j,\varepsilon}\to \phi_{q,\varepsilon}\text{ in } H^2_1(M) \text{ as } q_j\to q .
\end{equation*}
Put $A_j= B(q_j,2\delta)\cap B(q,2\delta)$. Since $q_j\to q$ there exist $j_o$ such that $A_j\neq\emptyset$ for all $j\ge j_o$. Then, for $q_j$ close to $q$ we have
\begin{eqnarray*}
&\int_{A_j}| \phi_{q_j,\varepsilon}(x)-\phi_{q,\varepsilon}(x)|^2dv_g&\\&=\int_{\exp_q^{-1}(A_j)}| (\phi_{q_j,\varepsilon}-\phi_{q,\varepsilon})(\exp_q(z))|^2\sqrt{|g_{\exp(z)}|}dz&\\
&= C(n,a)^2[\int_{\exp_q^{-1}(A_j)}\eta_{q,\delta}(\exp_q(z))^2
|(U_{q_j}-U_q)(\exp_q(z))|^2 \sqrt{|g_{\exp(z)}|}dz\\
&+\int_{\exp_q^{-1}(A_j)}U^2_{q_j}(\exp_q(z))|\eta_{q,\delta}(\exp_q(z))-\eta_{q_j,\delta}(\exp_q(z))|^2\sqrt{|g_{\exp(z)}|} dz&\\ &+2\int_{\exp_q^{-1}(A_j)}\eta_{q,\delta}U_{q_j,\varepsilon}|(\eta_{q,\delta}-\eta_{q_j,\delta})(\exp_q(z))|
&\\&|(U_{q_j}-U_q)(\exp_q(z))|^2 \sqrt{|g_{\exp(z)}|}dz]&
\end{eqnarray*}
where
\begin{equation*}
    U_{q,\varepsilon}(x)=\left(\frac{\varepsilon^{a}r_q(x)^{a-1}}{\varepsilon^{2a}+r_q(x)^{2a}}\right)^{\frac{n}{2}-1},q\in M.
\end{equation*}
Using the fact that $U_{q_j}\to u_q$ and $\eta _{q_j,\varepsilon}\to \eta_{q,\varepsilon}$ pointwise together with the boundedness of $ \int_{\exp_q^{-1}(A_j)}U^2_{q_j}(\exp_q(z))\sqrt{|g_{\exp(z)}|} dz$, we get that
\begin{equation*}
    \int_{A_j}| \phi_{q_j,\varepsilon}(x)-\phi_{q,\varepsilon}(x)|^2dv_g\to 0.
\end{equation*}
Of course, outside the set $A_j$, $ \int_{M\setminus A_j}| \phi_{q_j,\varepsilon}(x)-\phi_{q,\varepsilon}(x)|^2dv_g\to 0.$\\
Similarly, the same conclusion holds for $ \int_{M}| \nabla \phi_{q_j,\varepsilon}(x)-\nabla\phi_{q,\varepsilon}(x)|^2dv_g$.\\
Now, for the proof of second part of the lemma,  we begin with the case for $q=p$. In this case $ \I_\varepsilon(p)=\Phi(\phi_{p,\varepsilon})$ and then the conclusion follows by lemma \ref{Lem-expan}.\\
For  $q\neq p$,  let $\delta>0$  be small enough such that $B(q,2\delta)\cap B(p,2\delta)=\emptyset$. In this way, the functions $\phi_{p,\varepsilon}$ and $\phi_{q,\varepsilon}$  are of disjoint supports. Then, we have
\begin{eqnarray*}
&\int_M(|\nabla( (1-\varepsilon)\phi_{p,\varepsilon}+\varepsilon\phi_{q,\varepsilon} )|^2-\frac{h}{r_p^2}((1-\varepsilon)\phi_{p,\varepsilon}+\varepsilon\phi_{q,\varepsilon} )^2)dv_g&\\&=(1-\varepsilon)^2\left(\int_M(|\nabla\phi_{p,\varepsilon}|^2-\frac{h}{r_p^2}
\phi_{p,\varepsilon})dv_g\right)+\varepsilon^2\left(\int_M(|\nabla\phi_{q,\varepsilon}|^2-\frac{h}{r_p^2}
\phi_{p,\varepsilon})dv_g\right)&
\end{eqnarray*}
We point out that by considering a normal geodesic coordinate system around the point $q$, the expansion \eqref{4.18} remains the same for any point $q$, that is
\begin{eqnarray*}
&&\int_M|\nabla\phi_{q,\varepsilon}|^2dv_g\\ \nonumber &=&\int_{\R^n}|\nabla U|^2dx -Scal_g(q)C_1(n,a)
I_n^{a(n-2)+1}\varepsilon^2+o(\varepsilon^2)+\alpha(\varepsilon).
\end{eqnarray*}
Moreover, we have
\begin{eqnarray*}
&&\int_M\frac{|h(x)|}{r_p^2}\phi_{q,\varepsilon}^2dv_g\le \sup_M|h(x|[\frac{\delta^{-2}}{4}\int_{B(q,2\delta)} \phi^2_{q,\varepsilon}(x) dv_g\\&&+C(n,a)
    \left(\frac{\varepsilon^{a}\delta^{a-1}}{\varepsilon^{2a}+\delta^{2a}}\right)^{n-2}\int_{M\setminus B(q,2\delta)}\frac{1}{r_p^2}dv_g.
\end{eqnarray*}
The second integral is bounded by the Hardy inequality. For the first integral, as in \cite{Maliki}, by considering a geodesic normal coordinate system around the point $q$,  direct calculations give
\begin{equation*}
\int_{B(q,2\delta)} \phi^2_{q,\varepsilon}(x) dv_g=C(n,a)^2w_{n-1}\varepsilon^2\int_0^\infty\frac{t^{a(n-1)+1}}{(1+t^{2a})^{n-2}}dt+ \alpha(\varepsilon).
\end{equation*}
  with $ w_{n-1}$ is the volume of the unit sphere $S^{n-1}\subset\mathbb{R}^n$. Since $a(n-2)> 2$, we get that
\begin{equation*}
   \int_M\frac{|h(x)|}{r_p^2}\phi_{q,\varepsilon}^2dv_g=O(\varepsilon^2)+o(\varepsilon^2).
\end{equation*}
Hence,  we obtain
\begin{eqnarray*}
&(\int_M|\nabla( (1-\varepsilon)\phi_{p,\varepsilon}+\varepsilon\phi_{q,\varepsilon} )|^2-\frac{h}{r_p^2}((1-\varepsilon)\phi_{p,\varepsilon}+\varepsilon\phi_{q,\varepsilon} )^2)dv_g&\\&=(1-\varepsilon)^2\left(\int_{\mathbb{R}^n}(|\nabla U|^2-\frac{h(p)}{|x|^2})
dx+ \alpha(n,a)\varepsilon^2\right)&\\&+\varepsilon^2\int_{\mathbb{R}^n}(|\nabla U|^2dx +o(\varepsilon)^2+\alpha(\varepsilon).&
\end{eqnarray*}
with
\begin{equation*}
    \alpha(n,a)= \left(Scal_g(p)C_1(n,a)-(\frac{Scal_g(p)h(p)}{6}-\frac{\Delta h(p)}{n})C_2(n,a)\right)I_n^{a(n-2)+1}
\end{equation*}
On the other hand,  since the functions $\phi_{p,\varepsilon}, \phi_{q,\varepsilon} $ are of disjoint supports, we have
\begin{eqnarray*}
  &\left(\int_M|(1-\varepsilon)\phi_{p,\varepsilon}+\varepsilon\phi_{q,\varepsilon}|^{2^*}dv_g\right)^{\frac{2}{2^*}} &\\&=  \left((1-\varepsilon)^{2^*}\int_M|\phi_{p,\varepsilon}|^{2^*}dv_g+ \varepsilon^{2^*}\int_M|\phi_{q,\varepsilon}|^{2^*}dv_g\right)^{\frac{2}{2^*}}.&
\end{eqnarray*}
Here again the expansion \eqref{4.20} holds true for $\int_M|\phi_{q,\varepsilon}|^{2^*}dv_g$. That is
\begin{equation*}
\int_M|\phi_{q,\varepsilon}|^{2^*}dv_g= \int_{ \mathbb{R}^n}|u|^{2^*}dx-Scal_g(q)C_3(n,a)I_n^{a(n-2)+1}\varepsilon^2+o(\varepsilon^2)
+\alpha(\varepsilon).
\end{equation*}
 Then
\begin{eqnarray*}
&\left(\int_M|(1-\varepsilon)\phi_{p,\varepsilon}+\varepsilon\phi_{q,\varepsilon}|^{2^*}dv_g\right)^{\frac{2}{2^*}} &\\&=((1-\varepsilon)^{2^*}+\varepsilon^{2^*})^{\frac{2}{2^*}}\left(\int_{\mathbb{R}^n}|U|^{2^*}dx-
\frac{(1-\varepsilon)^{2^*}}{(1-\varepsilon)^{2^*}+\varepsilon^{2^*}}(Scal_g(p)C_1(n,a)
I_n^{a(n-2)+1}\varepsilon^2\right.&\\& \left.+o(\varepsilon^2)+\alpha(\varepsilon))\right)^{\frac{2}{2^*}}&
\end{eqnarray*}
By using the expansions
\begin{equation*}
  \frac{(1-\varepsilon)^{2^*}}{(1-\varepsilon)^{2^*}+\varepsilon^{2^*}}=1+o(\varepsilon^2)
\end{equation*}
and
\begin{eqnarray*}
((1-\varepsilon)^{2^*}+\varepsilon^{2^*})^{\frac{2}{2^*}}&=&
(1-\varepsilon)^{2}(1+(\frac{\varepsilon}{1-\varepsilon})^{2^*})^{\frac{2}{2^*}}
\\&=&(1-\varepsilon)^{2}\left(1+ \frac{2}{2^*}(\frac{\varepsilon}{1-\varepsilon})^{2^*}
+o((\frac{\varepsilon}{1-\varepsilon})^{2^*})\right),
\end{eqnarray*}
and remark that $2^*>2$, we get
\begin{eqnarray*}
  &\left(\int_M|(1-\varepsilon)\phi_{p,\varepsilon}+\varepsilon\phi_{q,\varepsilon}|^{2^*}dv_g\right)^{\frac{2}{2^*}} &\\ \nonumber&=((1-\varepsilon)^2(\int_{\mathbb{R}^n}|U|^{2^*}dx)^{\frac{2}{2^*}}(1-\frac{2}{2^*\int_{\mathbb{R}^n}|U|^{2^*}dx}Scal_g(p)C_1(n,a)
I_n^{a(n-2)+1}\varepsilon^2&\\ \nonumber &+o(\varepsilon^2)+\alpha(\varepsilon))&
\end{eqnarray*}
Thus, we get the development of
 \begin{eqnarray*}
&&\left( nJ((1-\varepsilon)\phi_{p,\varepsilon}+\varepsilon\phi_{q,\varepsilon})\right)^{\frac{2}{n}}=\\ &&\frac{ \int_M\left(|\nabla((1-\varepsilon)\phi_{p,\varepsilon}+\varepsilon\phi_{q,\varepsilon})|^2-\frac{h}{r_p^2}((1-\varepsilon)
\phi_{p,\varepsilon}+\varepsilon\phi_{q,\varepsilon})
^2\right)dv_g}{(\int_M|(1-\varepsilon)\phi_{p,\varepsilon}+\varepsilon\phi_{q,\varepsilon}|^{2^*}dv_g)^{\frac{2}{2^*}}}.
\end{eqnarray*}
as
\begin{eqnarray*}
 &\left( nJ((1-\varepsilon)\phi_{p,\varepsilon}+\varepsilon\phi_{q,\varepsilon})\right)^{\frac{2}{n}}&\\
&=\frac{\int_{\R^n}\left(|\nabla U|^2-h(p)\frac{U^2}{|x|^2}\right)dx}{(\int_{\R^n}|U|^{2^*}dx)^{\frac{2}{2^*}}}\left[1+\frac{1}{\int_{\R^n}|U|^{2^*}dx}( [\alpha(n,p)\right.&\\&\left.+\frac{n-2}{n}Scal_g(p)C_3(n,a)I_n^{a(n-2)+1}+
\frac{\varepsilon^2}{(1-\varepsilon)^2}\int_{\R^n}|\nabla U|^2dx\right.&\\&\left.+\alpha(\varepsilon)+o(\varepsilon^2)\right].&
   \end{eqnarray*}
Thus,  by definition of the constant $ D^*$,  we get
\begin{eqnarray*}
&\left( nJ((1-\varepsilon)\phi_{p,\varepsilon}+\varepsilon\phi_{q,\varepsilon})\right)^{\frac{2}{n}}< (nD^*)^{\frac{2}{n}}\left[1+\frac{1}{\int_{\R^n}|U|^{2^*}dx}( [\alpha(n,p)\right.&\\&\left.+\frac{n-2}{n}Scal_g(p)C_3(n,a)I_n^{a(n-2)+1}+\frac{1}{(1-\varepsilon)^2}\int_{\R^n}|\nabla U|^2dx)\varepsilon^2+\alpha(\varepsilon)\right].
\end{eqnarray*}
Hence,
\begin{equation*}
 J(\Phi((1-\varepsilon)\phi_{p,\varepsilon}+\varepsilon\phi_{q,\varepsilon})=D^*+\hbar(\varepsilon),
\end{equation*}
with
\begin{eqnarray*}
&\hbar(\varepsilon)=\frac{nD^*}{2\int_{\R^n}|U|^{2^*}dx}\left( (\alpha(n,p) +\frac{n-2}{n}Scal_g(p)C_3(n,a)I_n^{a(n-2)+1}\right.&\\&\left.+\frac{\varepsilon^2}{(1-\varepsilon)^2}\int_{\R^n}|\nabla U|^2dx+o(\varepsilon^2)\right).&
\end{eqnarray*}
Note that under condition (2) of the lemma, $\hbar (\varepsilon)>0$.\\
 Therefore, we obtain
\begin{equation*}
 D^*<J_h(\Phi((1-\varepsilon)\phi_{p,\varepsilon}+\varepsilon\phi_{q,\varepsilon}))<D^*+g(\varepsilon),
\end{equation*}
 with $g$ is any function such that $g(\varepsilon)>\hbar(\varepsilon)$ and $ g(\varepsilon)\to 0$.
\end{proof}
\subsection{ The map $\beta:\Sigma_\varepsilon\rightarrow M_{R_M}$.} In this subsection, we define a map $\beta:\Sigma_\varepsilon\rightarrow M_{R_M}$. For this aim, we introduce  the barycenter function $ \beta: \NN_h\to \mathbb{R}^n$ defined by
\begin{equation*}
    \beta(u)= \frac{\int_M (x+q-p)|u|^{2^*}dv_g}{\int_M |u|^{2^*}dv_g}.
\end{equation*}
 The function $\beta $ is well defined as $u\neq0$ for all $u \in \NN_h$ and the manifold $M$ is embedded in some $ \mathbb{R}^N$.\\
Now, we prove some properties of the function $\beta$ through  the following lemmas
\begin{Lemma} \label{lem4.3}We have
\begin{equation*}
    \lim_{\varepsilon\to 0}\beta(\I_{\varepsilon}(q))=q
\end{equation*}
\end{Lemma}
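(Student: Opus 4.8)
The plan is to exploit two features of the construction: the scale invariance of the barycenter and the disjoint supports of the two bubbles. First I would observe that $\Phi$ merely multiplies its argument by a positive scalar $t_0>0$, and since $|t_0 v|^{2^*}=t_0^{2^*}|v|^{2^*}$ this factor cancels in both the numerator and denominator of $\beta$. Hence $\beta(\I_\varepsilon(q))=\beta(u_\varepsilon)$ with $u_\varepsilon=(1-\varepsilon)\phi_{p,\varepsilon}+\varepsilon\phi_{q,\varepsilon}$, and it suffices to analyze this explicit combination.

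For $q\neq p$, choosing $\delta$ small so that $B(p,2\delta)\cap B(q,2\delta)=\emptyset$ (as in Lemma \ref{lem-inj}), the two bubbles have disjoint supports, so
$$|u_\varepsilon|^{2^*}=(1-\varepsilon)^{2^*}|\phi_{p,\varepsilon}|^{2^*}+\varepsilon^{2^*}|\phi_{q,\varepsilon}|^{2^*}.$$
The core estimate is then a single-bubble concentration statement: for a fixed center $q_0\in M$,
$$\int_M|\phi_{q_0,\varepsilon}|^{2^*}dv_g=\int_{\mathbb{R}^n}|U|^{2^*}dx+o(1), \qquad \frac{\int_M x\,|\phi_{q_0,\varepsilon}|^{2^*}dv_g}{\int_M|\phi_{q_0,\varepsilon}|^{2^*}dv_g}\longrightarrow q_0 \quad (\varepsilon\to 0),$$
where $x$ denotes the position in $\mathbb{R}^N$ under the Nash embedding. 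I would prove this by passing to normal coordinates $x=\exp_{q_0}(z)$ at $q_0$ and rescaling $z=\varepsilon y$; the rescaled profile converges to the Euclidean extremal $U$, the metric density satisfies $\sqrt{|g_{\exp_{q_0}(\varepsilon y)}|}\to 1$ and the cut-off $\eta_{q_0,\delta}\to 1$ uniformly on compact sets, while the contribution of the region $M\setminus B(q_0,\delta)$ is negligible. Dominated convergence applies because the relevant radial integral converges exactly under the standing dimension hypothesis $a(n-2)>2$, as in the computation of $\int_{B(q,2\delta)}\phi_{q,\varepsilon}^2 dv_g$ already carried out. Since $x=\exp_{q_0}(\varepsilon y)\to q_0$ pointwise, the weighted average tends to $q_0$.

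Combining these facts and writing $S_0=\int_{\mathbb{R}^n}|U|^{2^*}dx$, the unshifted barycenter becomes
$$\frac{\int_M x\,|u_\varepsilon|^{2^*}dv_g}{\int_M|u_\varepsilon|^{2^*}dv_g}=\frac{(1-\varepsilon)^{2^*}(p+o(1))+\varepsilon^{2^*}(q+o(1))}{(1-\varepsilon)^{2^*}+\varepsilon^{2^*}+o(1)}.$$
Because $\varepsilon^{2^*}/(1-\varepsilon)^{2^*}\to 0$, the $p$-bubble dominates and this ratio converges to $p$; adding the built-in shift $q-p$ in the definition of $\beta$ yields $\beta(\I_\varepsilon(q))\to p+(q-p)=q$. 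The boundary case $q=p$ is immediate, since then $u_\varepsilon=\phi_{p,\varepsilon}$ is a single bubble, the unshifted barycenter tends to $p$, and the shift gives again $q=p$. The main obstacle is the single-bubble concentration estimate above: one must control simultaneously the metric distortion, the cut-off, and the tail outside $B(q_0,\delta)$, and verify that the rescaled integrals converge, which is exactly where the integrability condition on $a$ enters.
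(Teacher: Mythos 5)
Your proposal is correct and follows essentially the same route as the paper: cancel the $\Phi$-scaling by homogeneity of $\beta$, use the disjoint supports of $\phi_{p,\varepsilon}$ and $\phi_{q,\varepsilon}$ to split $|u_\varepsilon|^{2^*}$, observe that the $p$-bubble dominates since $\varepsilon^{2^*}/(1-\varepsilon)^{2^*}\to 0$, and let the built-in shift $q-p$ in the definition of $\beta$ convert the limit $p$ of the unshifted barycenter into $q$. The only cosmetic difference is that you obtain the single-bubble concentration facts by rescaling and dominated convergence (an $o(1)$ statement, which suffices here), whereas the paper quotes the sharper $\varepsilon^2$-expansions of $\int_M|\phi_{q,\varepsilon}|^{2^*}dv_g$ and $\int_M|x-q|\,|\phi_{q,\varepsilon}|^{2^*}dv_g$ already computed in the authors' earlier work.
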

\begin{proof} We begin with case where $q=p$. By homogeneity of the function $\beta$, we have
\begin{eqnarray*}
    \beta(\I_{\varepsilon}(p))= \beta(\phi_{p,\varepsilon})=\frac{\int_M x|\phi_{p,\varepsilon}|^{2^*}dv_g}{\int_M |\phi_{p,\varepsilon}|^{2^*}dv_g}.
\end{eqnarray*}
Then
\begin{eqnarray*}
    |\beta(\I_{\varepsilon}(p))- p|&=&|\frac{\int_M x|\phi_{p,\varepsilon}|^{2^*}dv_g}{\int_M |\phi_{p,\varepsilon}|^{2^*}dv_g}-\frac{\int_M p |\phi_{p,\varepsilon}|^{2^*}dv_g}{\int_M |\phi_{p,\varepsilon}|^{2^*}dv_g}|\\
    &\le&\frac{\int_M |x-p||\phi_{p,\varepsilon}|^{2^*}dv_g}{\int_M |\phi_{p,\mu}|^{2^*}dv_g}.
\end{eqnarray*}
For the numerator, we have
\begin{eqnarray*}
\int_M |x-p||\phi_{p,\varepsilon}|^{2^*}dv_g= C(n,a)\int_M     \eta_{p,\delta}r_p(x)\left(\frac{\varepsilon^{a}r_p(x)^{a-1}}{\varepsilon^{2a}+r_p(x)^{2a}}\right)^{\frac{n}{2}-1}dv_g.
\end{eqnarray*}
We repeat the same calculation as in \cite{Maliki}, we get
\begin{eqnarray*}
\int_M |x-p||\phi_{p,\varepsilon}|^{2^*}dv_g&=& \varepsilon \int_{\mathbb{R}^n} |U|^{2^*}dx -Scal_g(p)C_3(n,a)I_n^{a(n-2)+1}\varepsilon^3\\&&+\varepsilon o(\varepsilon^2)+
\alpha(\varepsilon).
\end{eqnarray*}
For the dominator, we have already
\begin{equation*}
\int_M|\phi_{p,\varepsilon}|^{2^*}dv_g= \int_{ \mathbb{R}^n}|U|^{2^*}dx-Scal_g(p)C_3(n,a)I_n^{a(n-2)+1}\varepsilon^2+o(\varepsilon^2)
+\alpha(\varepsilon).
\end{equation*}
 By letting $\varepsilon\to0$, we get the desired equality.\\
 Now, for $q\neq p$, we choose $\delta $ small enough so that $ B(q,2\delta)\cap B(p,2\delta)=\emptyset$. In this situation, the functions $\phi_{p,\varepsilon}$ and $\phi_{q,\varepsilon}$ have disjoint supports.
 Then, similarly as above, we have
 \begin{eqnarray*}
    |\beta(\I_{\varepsilon}(q))-q|&\le&\frac{\int_M |x-p||(1-\varepsilon)\phi_{p,\varepsilon}+\varepsilon\phi_{q,\varepsilon}|^{2^*}dv_g}{\int_M |(1-\varepsilon)\phi_{p,\varepsilon}+\varepsilon\phi_{q,\varepsilon}|^{2^*}dv_g}.
\end{eqnarray*}
 Since the functions  $\phi_{p,\varepsilon}$ and $\phi_{q,\varepsilon}$ have disjoint supports, we have
 \begin{eqnarray*}
&&\int_M |x-q||\phi_{q,\varepsilon}|^{2^*}dv_g\\&=& (1-\varepsilon)^{2^{*}}\int_M |x-p||\phi_{p,\varepsilon}^{2^*}dv_g+\varepsilon^{2^*}\int_M |x-p||\phi_{q,\varepsilon}|^{2^*}dv_g\\
&\le &  (1-\varepsilon)^{2^{*}}\int_M |x-p||\phi_{p,\varepsilon}^{2^*}dv_g+\varepsilon^{2^*}\int_M |x-p||\phi_{q,\varepsilon}|^{2^*}dv_g\\&&+ \varepsilon^{2^*}\int_M |x-q||\phi_{q,\varepsilon}|^{2^*}dv_g+
\varepsilon^{2^*}|q-p|\int_M |\phi_{q,\varepsilon}|^{2^*}dv_g
\end{eqnarray*}
Like before, we have
\begin{equation*}
\int_M|\phi_{q,\varepsilon}|^{2^*}dv_g= \int_{ \mathbb{R}^n}|U|^{2^*}dx-Scal_g(q)C_3(n,a)I_n^{a(n-2)+1}\varepsilon^2+o(\varepsilon^2),
+\alpha(\varepsilon).
\end{equation*}
and
\begin{eqnarray*}
\int_M |x-q||\phi_{q,\varepsilon}|^{2^*}dv_g&=& \varepsilon \int_{\mathbb{R}^n} |U|^{2^*}dx -Scal_g(q)C_3(n,a)I_n^{a(n-2)+1}\varepsilon^3\\&&+\varepsilon o(\varepsilon^2)+
\alpha(\varepsilon).
\end{eqnarray*}
Hence, by using the expansion
\begin{eqnarray*}
&\int_M|(1-\varepsilon)\phi_{p,\varepsilon}+\varepsilon\phi_{q,\varepsilon}|^{2^*}dv_g &\\ &=((1-\varepsilon)^{2^*}+\varepsilon^{2^*})\int_{\mathbb{R}^n}|U|^{2^*}dx-(1-\varepsilon)^{2^*}Scal_g(p)C_1(n,a)
I_n^{a(n-2)+1}\varepsilon^2&\\ \nonumber &+o(\varepsilon^2)+\alpha(\varepsilon)),&
\end{eqnarray*}
we get
 \begin{eqnarray*}
   |\beta(\I_{\varepsilon}(q))-q|\to 0 \text{ as } \varepsilon\to 0.
\end{eqnarray*}
\end{proof}

\begin{Lemma} \label{lem4.4}For any $\eta\in(0,1)$ and for every $u\in \Sigma_\varepsilon$, there  exists a point $q\in M$ such that
\begin{equation*}
    \int_{B(q,\frac{r_M}{2})}|u|^{2^*}dv_g>(1-\eta)(S_{h(p)})^{\frac{n}{2}}
\end{equation*}
\end{Lemma}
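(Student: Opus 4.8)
The plan is to argue by contradiction and to reduce the statement to the concentration analysis already available through Corollary \ref{cor2}: I want to show that an element of $\Sigma_\varepsilon$ with $\varepsilon$ small has almost all of its $L_{2^*}$-mass inside one geodesic ball, which can only be centred near $p$. The statement being understood for $g(\varepsilon)$ small, suppose it fails. Then there are $\eta\in(0,1)$, a sequence $\varepsilon_m\to0$ and $u_m\in\Sigma_{\varepsilon_m}$ with
\begin{equation*}
\int_{B(q,\frac{r_M}{2})}|u_m|^{2^*}dv_g\le(1-\eta)(S_{h(p)})^{\frac{n}{2}}\qfev q\in M.
\end{equation*}
Since $u_m\in\NN_h$, the relation $DJ_h(u_m).u_m=0$ gives $\int_M(|\nabla u_m|^2-\frac{h}{\rho_p^2}u_m^2)dv_g=\int_M|u_m|^{2^*}dv_g$, whence $J_h(u_m)=\frac1n\int_M|u_m|^{2^*}dv_g$. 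Using $D^*<J_h(u_m)<D^*+g(\varepsilon_m)$ together with $nD^*=(S_{h(p)})^{\frac n2}$, I read off
\begin{equation*}
\int_M|u_m|^{2^*}dv_g\longrightarrow(S_{h(p)})^{\frac n2},\qquad J_h(u_m)\longrightarrow D^*,
\end{equation*}
and, by the coercivity of $u\mapsto\int_M(|\nabla u|^2-\frac{h}{\rho_p^2}u^2)dv_g$ recorded after \eqref{4.12'}, that $\{u_m\}$ is bounded in $H^2_1(M)$.

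Next I would promote $\{u_m\}$ to a Palais--Smale sequence at level $D^*$. Applying Ekeland's variational principle to $J_h$ on the natural constraint $\NN_h$ (a P-S sequence on $\NN_h$ being a P-S sequence of $J_h$ on $H^2_1(M)$), one gets $\tilde u_m\in\NN_h$ with $\|\tilde u_m-u_m\|_{H^2_1(M)}\to0$, $J_h(\tilde u_m)\to D^*$ and $DJ_h(\tilde u_m)\to0$; as the perturbation is small in $H^2_1(M)\hookrightarrow L_{2^*}(M)$, the non-concentration bound is inherited by $\tilde u_m$. Since $\mu>(nD^*)^{\frac2n}$, Corollary \ref{cor2} applies and produces $w_m\to0$ strongly in $H^2_1(M)$, a number $\varrho>0$ and $R_m\to0$ with
\begin{equation*}
\tilde u_m=w_m+\phi_{p,R_m\varrho},
\end{equation*}
where $\phi_{p,R_m\varrho}$ is given by \eqref{4.23} with $q=p$ and $\xi=R_m\varrho$.

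It then remains to read off the concentration. The rescaled profile satisfies $\int_{\R^n}|U|^{2^*}dx=(S_{h(p)})^{\frac n2}$ by \eqref{2.7}, and as $R_m\to0$ the bubble $\phi_{p,R_m\varrho}$ concentrates at $p$, so for the fixed radius $r_M/2>0$ one has $\int_{B(p,\frac{r_M}{2})}|\phi_{p,R_m\varrho}|^{2^*}dv_g\to(S_{h(p)})^{\frac n2}$. Because $w_m\to0$ in $L_{2^*}(M)$ by the continuous Sobolev embedding, the same limit holds for $\tilde u_m$, and after undoing the Ekeland perturbation for $u_m$. Taking $q=p$ then yields $\int_{B(p,\frac{r_M}{2})}|u_m|^{2^*}dv_g>(1-\eta)(S_{h(p)})^{\frac n2}$ for $m$ large, contradicting the non-concentration bound and proving the lemma.

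The main obstacle is the middle step: passing from the sole energy information $J_h(u_m)\to D^*$ on $\NN_h$ to a genuine P-S sequence at the critical level $D^*$ while keeping the non-concentration property, so that Corollary \ref{cor2} is legitimately invoked. The alternative, which carries the real work, is to analyse the bounded sequence $\{u_m\}$ directly by the concentration-compactness principle: one must exclude vanishing (ruled out since the total mass tends to $(S_{h(p)})^{\frac n2}>0$) and a nontrivial weak limit (ruled out exactly by $\mu>(nD^*)^{\frac2n}$, as in the opening of the proof of Corollary \ref{cor2}), after which the quantization of the concentration masses at the level $D^*$ forces a single bubble located at $p$.
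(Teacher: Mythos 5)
Your proof follows essentially the same route as the paper's: a contradiction argument with $\varepsilon_m\to0$ and $u_m\in\Sigma_{\varepsilon_m}$, an Ekeland perturbation to upgrade $u_m$ to a Palais--Smale sequence at level $D^*$, Corollary \ref{cor2} to extract the single bubble $\phi_{p,R_m\varrho}$, and the observation that its $L_{2^*}$-mass concentrates near $p$, contradicting the assumed non-concentration bound at $q=p$. The only cosmetic differences are that the paper quantifies the bubble's mass via the expansion \eqref{4.20} and an elementary inequality for $(a+b)^{2^*}$ where you invoke the Sobolev embedding, and that you are somewhat more explicit about carrying the non-concentration bound through the Ekeland perturbation.
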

\begin{proof} Suppose by contradiction that there exist $\eta\in(0,1)$ , a sequence $\varepsilon_m\to 0$ as $m\to\infty$ and a sequence $u_m\in \Sigma_{\varepsilon_m}$ such that for all $q\in M$
\begin{equation*}
  \int_{B(q,\frac{r_M}{2})}|u_m|^{2^*}dv_g\le(1-\eta)(S_{h(p)})^{\frac{n}{2}}.
\end{equation*}
By the Ekland variational principle, we can assume that $D_{\NN_h}J_h(u_m)\to 0$ as $m\to \infty$. Since $D^*<J_h(u_m)<D^*+g(\varepsilon_m)$, for some $g(\varepsilon_m)>0$ and $g(\varepsilon_m)\to0$ as $m\to0$ and since the manifold $\NN_h$  defines a natural constraint for the functional $J_h$ ( see \cite{Amrosetti}), we can assume that $u_m$ is a P-S sequence of $J_h$ at level $D^*$. Thus by corollary \ref{cor2}, there  exists a sequence of reals $R_m \to 0$ as $m\to\infty$ and a sequence $w_m\in H^2_1(M)$ that converges strongly to $0$ in $\in H^2_1(M)$ such that
 \begin{equation*}
    u_m= \phi_{p,R_m\varrho}+w_m.
 \end{equation*}
 Hence, by applying the inequality
 \begin{equation*}
 (a+b)^{2^*}\le a^{2^*}+b^{2^*}+2^*a^{2^*-1}b+2^*ab^{2^*-1}, a\ge0,b\ge 0,
 \end{equation*}
 and  by using the fact that $w_m\to0$ strongly in $H^2_1(M)$, we obtain
 \begin{equation*}
  \int_{B(q,\frac{r_M}{2})}|\phi_{p,R_m\varrho}|^{2^*}dv_g\le(1-\eta)(S_{h(p)})^{\frac{n}{2}}.
\end{equation*}
Put $\varepsilon_m^*=R_m\varrho $. Then, $\varepsilon_m^*\to 0$ as $m\to \infty$. Thus, by using the expansion \eqref{4.20}, we have
\begin{eqnarray*}
\int_M|\phi_{p,\varepsilon_m^*}|^{2^*}dv_g&=& \int_{\R^n}|U|^{2^*}dx-Scal_g(p)C_3(n,a)I_n^{a(n-2)+1}(\varepsilon_m^*)^2
\\&&+o((\varepsilon_m^*)^2)+\alpha(\varepsilon_m^*),
\end{eqnarray*}
 As the function $U$ is a positive solution of \eqref{eqs}, we get
 \begin{eqnarray*}
\int_M|\phi_{\varepsilon_m^*,p}|^{2^*}dv_g&=& (nD^*)^{\frac{n}{2}}-Scal_g(p)C_3(n,a)I_n^{a(n-2)+1}(\varepsilon_m^*)^2
\\&&+o((\varepsilon_m^*)^2)+\alpha(\varepsilon_m^*)
\end{eqnarray*}
 Recall that the function $U$ is supported in $B(p,\delta)$, then by choosing $\delta$ small, we obtain by
  \begin{equation*}
  (S_{h(p)})^{\frac{n}{2}}-Scal_g(p)C_3(n,a)I_n^{a(n-2)+1}(\varepsilon_m^*)^2 +o((\varepsilon_m^*)^2)+\alpha(\varepsilon_m^*)\le(1-\eta)(S_{h(p)})^{\frac{n}{2}}.
\end{equation*}
 Hence, by letting $m\to\infty$, we get the contradiction:
\begin{equation*}
(S_{h(p)})^{\frac{n}{2}}\le(1-\eta)(S_{h(p)})^{\frac{n}{2}}.
\end{equation*}
\end{proof}
\begin{Lemma}\label{lem4.5} For $\epsilon$  small, $\beta(u)\in M_r$ for every function $u\in\Sigma_\varepsilon$.
\end{Lemma}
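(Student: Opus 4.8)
The plan is to combine the mass-concentration statement of Lemma \ref{lem4.4} with the observation that, on the Nehari manifold, the total mass $\int_M|u|^{2^*}dv_g$ is forced to lie very close to $(S_{h(p)})^{\frac{n}{2}}$. First I would record the Nehari identity: for $u\in\NN_h$ the constraint $DJ_h(u).u=0$ reads $\int_M(|\nabla u|^2-\frac{h}{\rho_p^2}u^2)dv_g=\int_M|u|^{2^*}dv_g$, so that $J_h(u)=\frac1n\int_M|u|^{2^*}dv_g$. Hence for $u\in\Sigma_\varepsilon$ the bounds $D^*<J_h(u)<D^*+g(\varepsilon)$ together with \eqref{2.8} give
\begin{equation*}
(S_{h(p)})^{\frac{n}{2}}<\int_M|u|^{2^*}dv_g<(S_{h(p)})^{\frac{n}{2}}+ng(\varepsilon),
\end{equation*}
so the total mass tends to $(S_{h(p)})^{\frac{n}{2}}$ uniformly on $\Sigma_\varepsilon$ as $\varepsilon\to0$.

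Next, fixing $\eta\in(0,1)$ and $u\in\Sigma_\varepsilon$, I would invoke Lemma \ref{lem4.4} to produce a point $q=q(u)\in M$ carrying almost all the mass, $\int_{B(q,\frac{r_M}{2})}|u|^{2^*}dv_g>(1-\eta)(S_{h(p)})^{\frac{n}{2}}$. Since $q\in M$ we have $dist(\beta(u),M)\le|\beta(u)-q|$, so it suffices to estimate the latter. Writing $T=\int_M|u|^{2^*}dv_g$, I would use
\begin{equation*}
|\beta(u)-q|\le\frac1T\int_M|x-q|\,|u|^{2^*}dv_g
\end{equation*}
and split the integral over $B(q,\frac{r_M}{2})$ and its complement. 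On the geodesic ball the chordal distance obeys $|x-q|\le dist_g(q,x)<\frac{r_M}{2}$, giving a contribution at most $\frac{r_M}{2}T$; on the complement I would bound $|x-q|$ by the finite Euclidean diameter $d_M$ of the compact set $M$ and control the leftover mass by $T-(1-\eta)(S_{h(p)})^{\frac{n}{2}}$.

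Putting the two pieces together yields
\begin{equation*}
|\beta(u)-q|<\frac{r_M}{2}+d_M\,\frac{T-(1-\eta)(S_{h(p)})^{\frac{n}{2}}}{T}.
\end{equation*}
By the first paragraph the fraction tends to $\eta$ uniformly in $u$ as $\varepsilon\to0$, so for $\varepsilon$ small the right-hand side is at most $\frac{r_M}{2}+2d_M\eta$. Choosing $\eta$ so that $2d_M\eta<\frac{r_M}{2}$ and then $\varepsilon$ correspondingly small forces $|\beta(u)-q|<r_M$ for all $u\in\Sigma_\varepsilon$, i.e. $\beta(u)\in M_{r_M}$, which is the asserted conclusion. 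The one substantive ingredient is the concentration estimate of Lemma \ref{lem4.4}; granting it, the rest is an elementary splitting of the barycenter integral, the only point needing a word being the comparison $|x-q|\le dist_g(q,x)$ between chordal and geodesic distance, which holds because $M$ is isometrically embedded and compact.
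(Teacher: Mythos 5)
Your argument is correct and follows essentially the same route as the paper: the Nehari identity $J_h(u)=\frac1n\int_M|u|^{2^*}dv_g$ pins the total mass near $(S_{h(p)})^{\frac{n}{2}}$, Lemma \ref{lem4.4} supplies the concentration, and the barycenter integral is split over the ball of radius $\frac{r_M}{2}$ and its complement, with $\eta$ and $\varepsilon$ chosen at the end so the remainder term is below $\frac{r_M}{2}$. The only (harmless, arguably more careful) deviation is that you measure $|\beta(u)-q|$ from the concentration point $q$ furnished by Lemma \ref{lem4.4} and then use $\dist(\beta(u),M)\le|\beta(u)-q|$, whereas the paper estimates $|\beta(u)-p|$ directly, implicitly taking the concentration point to be $p$.
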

\begin{proof} It suffices to prove that for every $u\in\Sigma_\varepsilon$,
\begin{equation*}
    |\beta(u)-p|\le r_M.
\end{equation*}
Let $u\in\Sigma_\varepsilon$, by lemma \ref{lem4.4}, we get that for any $\eta\in (0,1)$
\begin{equation*}
\frac{\int_{B(p,\frac{r_M}{2})}|u|^{2^*}dv_g}{\int_{M}|u|^{2^*}dv_g}>
\frac{(1-\eta)(S_{h(p)})^{\frac{n}{2}}}{n(D^*+g(\varepsilon))}
=\frac{(1-\eta)(S_{h(p)})^{\frac{n}{2}}}{(S_{h(p)})^{\frac{n}{2}}+ng(\varepsilon)}
\end{equation*}
Then, we obtain
\begin{eqnarray*}
    |\beta(u)-p|&=&| \frac{\int_M(x-p)|u|^{2^*}dv_g}{\int_M|u|^{2^*}dv_g}|\\
    &\le&\frac{\int_{B(p,\frac{r_M}{2})}|x-p||u|^{2^*}dv_g}{\int_M|u|^{2^*}dv_g}+ \frac{\int_{M\setminus B(p,\frac{r_M}{2})}|x-p||u|^{2^*}dv_g}{\int_M|u|^{2^*}dv_g} \frac{r_M}{2}\\
    &\le&\frac{r_M}{2}+D(M)(1- \frac{\int_{B(p,\frac{r_M}{2})}|u|^{2^*}dv_g}{\int_{M}|u|^{2^*}dv_g})   \\
    &\le&  \frac{r_M}{2}+D(M)(1- \frac{(1-\eta)(S_{h(p)})^{\frac{n}{2}}}{(S_{h(p)})^{\frac{n}{2}}+ng(\varepsilon)})
\end{eqnarray*}
 where $D(M)$ is the diameter of $M$. Thus, in order to get the conclusion, it suffices to choose $\eta$ and $\varepsilon $ small enough so that
\begin{equation*}
D(M)(1- \frac{(1-\eta)(S_{h(p)})^{\frac{n}{2}}}{(S_{h(p)})^{\frac{n}{2}}+ng(\varepsilon)})\le \frac{r_M}{2}.
\end{equation*}
 \end{proof}
\section{Proof of the main result}
\begin{proof} By Lemmas \ref{lem-inj} and \ref{lem4.5} the maps $\I_\varepsilon: M\rightarrow \Sigma_\varepsilon$ and $\beta: \sigma_\varepsilon\rightarrow M_{r_M}$ are well defined. Moreover, by lemma \ref{lem4.3} the composition $\beta \circ\I_\varepsilon: M\rightarrow M_{r_M}$ is well defined  and is homotopic to the identity. Thus, by the properties of  Lusternik-Schnirelmann category, $Cat \Sigma_\varepsilon\ge Cat(M)$.
Since the Palais-Smale conditions are satisfied in the set $\Sigma_\varepsilon$, by theorem \ref{thm-L-S} there are at least  $ cat(M)$ critical  points of the functional $J_h$.\\
It remains, to achieve the proof of the theorem, to prove that there exists another critical point $u$ with $J_h(u)>D^*+g(\varepsilon)$. For this task, following \cite{Benci}, we construct a set $P_\varepsilon$ which is contractible in $\NN_h\cap J_h^c$.\\
 Let $V\in D^{1,2})(\mathbb{R}^n)$ be any function and define on the manifold $M$ the function
 \begin{equation*}
    V_\varepsilon(x)= \eta_{p,\delta}(\exp_p^{-1}(x))V(\varepsilon^{-1}\exp_p^{-1}(x), x\in B(p,\delta).
   \end{equation*}
Put $ \varphi_\varepsilon=(1-\varepsilon)\phi_{p,\varepsilon}+\varepsilon\phi_{q,\varepsilon} $ and define the set
\begin{equation*}
    \Omega_\varepsilon=\{ (1-t)\varphi_\varepsilon+tV_\varepsilon , t\in [0,1]\}
\end{equation*}
Consider $P_\varepsilon$, the projection of $\Omega_\varepsilon$ on the Nehari manifold $\NN_h$
\begin{equation*}
 P_\varepsilon=\{ \Phi(\omega_\varepsilon), \omega_\varepsilon\in \Omega_\varepsilon \}
\end{equation*}
We notice immediately that $ \I_\varepsilon(M)\subset P_\varepsilon$, $P_\varepsilon$ is compact and contractible in $\NN_h$. Then, put
\begin{equation*}
    c_\varepsilon=\sup_{u_\varepsilon\in P_\varepsilon}J_h(u).
\end{equation*}
We need to prove that $c_\varepsilon$ is bounded with respect to $\varepsilon$. For this aim, for $u\in\Omega_\varepsilon$ write
\begin{equation*}
    J_h(\Phi(u))= \frac{1}{n}\left(\frac{\int_M(|\nabla u|^2-\frac{h}{\rho_p^2}u^2)dv_g}{(\int_M|u|^{2^*}dv_g)^\frac{2}{2^*}}\right)^{\frac{n}{2}}.
\end{equation*}
We have
\begin{eqnarray}\label{5.24}
&&\nonumber \int_M|\nabla u_\varepsilon|^2dv_g\\ \nonumber &=&t^2\int_M|\nabla V_\varepsilon|^2dv_g+(1-t)^2  \int_M|\nabla \varphi_\varepsilon|^2dv_g+2t(t-1)\int_M\nabla V_\varepsilon.\nabla\varphi_\varepsilon dv_g\\&&
\\ \nonumber&\le&\int_{\mathbb{R}^n}|\nabla V|^2dx+ 2\int_{\mathbb{R}^n}|\nabla U|^2dx+2(\int_{\mathbb{R}^n}|\nabla V|^2.\int_{\mathbb{R}^n}|\nabla U|^2dx)^{\frac{1}{2}}+K_1.
\end{eqnarray}
Also, we have
\begin{eqnarray}\label{5.25}
&&\nonumber|\int_M\frac{h}{\rho_p^2}u^2_\varepsilon dv_g|\\ \nonumber&\le& t^2\int_M \frac{h}{\rho_p^2}V^2_\varepsilon dv_g+(1-t)^2\int_M \frac{h}{\rho_p^2}\varphi^2_\varepsilon dv_g+2t(1-t)\sup_M|h|K(n,-2,\delta)\\ \nonumber&&((\int_M |V_\varepsilon|^2dv_g\int_M|\nabla \varphi_\varepsilon|^2 dv_g)^\frac{1}{2}+(\int_M |\varphi_\varepsilon|^2dv_g\int_M\nabla|V_\varepsilon|^2 dv_g)^{\frac{1}{2 }})\\&& \\ \nonumber
&\le& h(p)( \int_{\mathbb{R}^n}|V|^2dx+ \int_{\mathbb{R}^n}|\nabla U|^2dx)+C((\int_{\mathbb{R}^n}|V|^2dx\int_{\mathbb{R}^n}|\nabla U|^2dx)^{\frac{1}{2}}\\ \nonumber&+&
(\int_{\mathbb{R}^n}|U|^2dx\int_{\mathbb{R}^n}|\nabla V|^2dx)^{\frac{1}{2}}+K_2.
\end{eqnarray}
Moreover, there exists $\varepsilon_o$ and $\varepsilon_1$ such that
\begin{equation*}
\int_M|\varphi_\varepsilon|^{2^*}dv_g \ge \int_{\mathbb{R}^n}|U|^{2^*}dx- Scal_g(p)C_3(n,a)I_n^{a(n-2)+1}\varepsilon_o^2>0,
      \end{equation*}
and
\begin{eqnarray*}
  \int_M|V_\varepsilon|^{2^*}dv_g \ge\int_{\mathbb{R}^n}|V|^{2^*}dx- Scal_g(p)C_3(n,a)I_n^{a(n-2)+1}\varepsilon_1^2>0.
\end{eqnarray*}
 Then, since $V_\varepsilon$ and $\varphi_\varepsilon$ ar positive, we get
\begin{eqnarray*}
&&\int_M|u_\varepsilon|^{2^*}dv_g\\&\ge& \max( t^{2^*}\int_M|V_\varepsilon|^{2^*}dv_g,(1-t)^{2^*}\int_M|\varphi_\varepsilon|^{2^*}dv_g)\\
&\ge &\frac{1}{2^*}\min(\int_{\mathbb{R}^n}|U|^{2^*}dx-K_3,\int_{\mathbb{R}^n}|V|^{2^*}dx-K_4),
\end{eqnarray*}
where
\begin{equation*}
K_3=Scal_g(p)C_3(n,a)I_n^{a(n-2)+1}\varepsilon_o^2 \text{ and } k_4=Scal_g(p)C_3(n,a)I_n^{a(n-2)+1}\varepsilon_1^2.
\end{equation*}
which gives together with estimates \eqref{5.24} and \eqref{5.25} the thesis.
\end{proof}

\end{document}